\newcommand{\colsp}{\mathrm{colsp}}
\newcommand{\rowsp}{\mathrm{rowsp}}
\newcommand{\cV}{\mathcal{V}}
\newcommand{\NN}{\mathbb{N}}
\newcommand{\RR}{\mathbb{R}}
\newcommand{\Rb}{\mathbb{R}}
\newcommand{\bone}{\boldsymbol{\mathbf{1}}}
\newcommand{\bbE}{\mathbb{E}}
\newcommand{\bc}{\boldsymbol{c}}
\newcommand{\bd}{\boldsymbol{d}}
\newcommand{\be}{\boldsymbol{e}}
\newcommand{\bh}{\boldsymbol{h}}
\newcommand{\bk}{\boldsymbol{k}}
\newcommand{\bt}{\boldsymbol{t}}
\newcommand{\bv}{\boldsymbol{v}}
\newcommand{\bw}{\boldsymbol{w}} 
\newcommand{\bx}{\boldsymbol{x}}
\newcommand{\by}{\boldsymbol{y}} 
\newcommand{\bA}{\boldsymbol{A}}
\newcommand{\bB}{\boldsymbol{B}}
\newcommand{\bD}{\boldsymbol{D}}
\newcommand{\bG}{\boldsymbol{G}}
\newcommand{\bH}{\boldsymbol{H}}
\newcommand{\bI}{\boldsymbol{I}}
\newcommand{\bM}{\boldsymbol{M}}
\newcommand{\bO}{\boldsymbol{O}}
\newcommand{\bP}{\boldsymbol{P}}
\newcommand{\bQ}{\boldsymbol{Q}}
\newcommand{\bS}{\boldsymbol{S}}
\newcommand{\bT}{\boldsymbol{T}}
\newcommand{\bU}{\boldsymbol{U}}
\newcommand{\bV}{\boldsymbol{V}}
\newcommand{\bW}{\boldsymbol{W}} 
\newcommand{\bX}{\boldsymbol{X}}
\newcommand{\bZ}{\boldsymbol{Z}}
\newcommand{\bSigma}{\boldsymbol{\Sigma}} 
\newcommand{\balpha}{\boldsymbol{\alpha}} 
\newcommand{\bbeta}{\boldsymbol{\beta}} 
\newcommand{\bvarepsilon}{\boldsymbol{\varepsilon}} 
\newcommand{\blambda}{\boldsymbol{\lambda}}
\newcommand{\btau}{\boldsymbol{\tau}} 
\newcommand{\bdelta}{\boldsymbol{\delta}}
\newcommand{\bphi}{\boldsymbol{\phi}}
\newcommand{\bpsi}{\boldsymbol{\psi}}
\newcommand{\bzero}{\boldsymbol{0}} 
\newcommand{\btheta}{\boldsymbol{\theta}} 
\newcommand{\bgamma}{\boldsymbol{\gamma}}
\newcommand{\bDelta}{\boldsymbol{\Delta}} 
\newcommand{\Gram}{\bG}
\newcommand{\cI}{\mathcal{I}}
\newcommand{\cJ}{\mathcal{J}}
\newcommand{\cS}{\mathcal{S}}
\newcommand{\cW}{\mathcal{W}}
\newcommand{\htau}{\widehat{\tau}} 
\newcommand{\hbtau}{\widehat{\btau}} 
\newcommand{\hbdelta}{\widehat{\bdelta}} 
\newcommand{\hbbeta}{\widehat{\bbeta}} 
\newcommand{\hbgamma}{\widehat{\bgamma}}
\newcommand{\hby}{\widehat{\by}}
\newcommand{\hbtheta}{\widehat{\btheta}} 
\newcommand{\hsigma}{\widehat{\sigma}} 
\newcommand{\hbalpha}{\widehat{\balpha}} 
\newcommand{\hblambda}{\widehat{\blambda}} 
\newcommand{\hbphi}{\widehat{\bphi}}
\newcommand{\hbpsi}{\widehat{\bpsi}}
\newcommand{\hbDelta}{\widehat{\bDelta}} 
\newcommand{\tbvarepsilon}{\widetilde{\bvarepsilon}} 
\newcommand{\tvarepsilon}{\widetilde{\varepsilon}} 
\newcommand{\tbW}{\widetilde{\bW}} 
\newcommand{\tbalpha}{\widetilde{\balpha}}
\newcommand{\tbtau}{\widetilde{\btau}}
\newcommand{\ttau}{\widetilde{\tau}}
\newcommand{\tbtheta}{\widetilde{\btheta}}
\newcommand{\diag}{\mathrm{diag}} 
\newcommand{\rank}{\mathrm{rank}\,}
\newcommand{\tr}{\text{\normalfont tr}\,}
\newcommand{\Cov}{\text{\normalfont Cov}}
\newcommand{\si}{\sim i}
\newcommand{\F}{\textup{F}}
\newcommand{\Normal}{\mathsf{N}}
\DeclareMathOperator*{\argmin}{\arg \min}
\newtheorem{theorem}{Theorem}
\newtheorem{proposition}{Proposition}
\newtheorem{lemma}{Lemma}
\newtheorem{definition}{Definition}
\newtheorem{remark}{Remark}
\newtheorem{corollary}{Corollary}
\newcommand{\hsigmareg}{\hsigma_{\cW}}
\newcommand{\hsigmaunreg}{\hsigma_{\cW^c}}
\newcommand{\hsigmaf}{\hsigma_{\text{full}}}
\newcommand{\hsigmafin}{\hsigma_{\emph{full}}}
\newcommand{\hsigmap}{\hsigma_{\text{partial}}}
\newcommand{\hsigmapin}{\hsigma_{\emph{partial}}}
\begin{document}
\runningtitle{}
\runningauthor{}

\twocolumn[

\tmptitle{Algebraic and Statistical Properties of\\
the Partially Regularized Ordinary Least Squares Interpolator}

\tmpauthor{Letian Yang \And Dennis Shen}

\tmpaddress{University of Southern California \And  University of Southern California}]

\begin{abstract}
Modern deep learning has revealed a surprising statistical phenomenon known as benign overfitting, with high-dimensional linear regression being a prominent example. 
This paper contributes to ongoing research on the ordinary least squares (OLS) interpolator, focusing on the partial regression setting, where only a subset of coefficients is implicitly regularized. 
On the algebraic front, we extend Cochran's formula and the leave-one-out residual formula for the partial regularization framework. 
On the stochastic front, we leverage our algebraic results to design several homoskedastic variance estimators under the Gauss-Markov model. These estimators serve as a basis for conducting statistical inference, albeit with slight conservatism in their performance. 
Through simulations, we study the finite-sample properties of these variance estimators across various generative models.

\end{abstract}

\section{Introduction}
The study of overparameterized statistical models in deep learning methodology has revealed a fascinating phenomenon known as \textit{benign overfitting} \citep{zhang2017understanding, belkin_2021, benign_overfitting_tutorial}, where models that perfectly fit to (noisy) training data are still able to accurately predict on unseen test data. 
This discovery has challenged conventional wisdoms in statistical learning theory on the tradeoffs between the complexity of the prediction model and its fit to training data. 
Consequently, there has been significant theoretical interest in both statistics and machine learning to rethink the concept of generalization and the conditions under which it is achievable.  
One prominent example of a model that exhibits benign overfitting is the ordinary least squares (OLS) estimator when its number of parameters $p$ exceeds the number of training examples $n$. 
Given its simplicity and broad applicability across a myriad of domains, the OLS estimator emerges as a critical instrument to gain foundational insights into when a model can overfit to training data without harming its prediction accuracy.

\subsection{The OLS Estimator}
We assume access to training (or in-sample) data $\{(\bx_i, y_i): i =1, \dots, n\}$, where $\bx_i \in \Rb^p$ and $y_i \in \Rb$ represent the $i$-th covariate and response, respectively. Let $\bX \in \mathbb{R}^{n \times p}$ collect the covariates and $\by \in \mathbb{R}^n$ collect the responses. 
We define 
$\cS = \argmin_{\bbeta \in \Rb^p} \| \by - \bX \bbeta \|_2^2$ 
as the set of all vectors that minimize the $\ell_2$-size of the training error. 
In view of $\cS$, we can define a class of minimum $\ell_2$-norm solutions: given a nonempty index set $\cW \subseteq [p] = \{1, \dots, p\}$, let 
\begin{align}
	\hbbeta^{[\cW]} \in \argmin_{\bbeta \in \cS} \sum_{i \in \cW} \beta_i^2.  \label{eq:min_l2_norm_sol}
\end{align} 
Among all solutions in $\cS$, $\hbbeta^{[\cW]}$ yields the solution that minimizes the $\ell_2$-size of the coefficients indexed by $\cW$. 
Thus, when $\cW = [p]$, we refer to the solution as the ``fully regularized'' OLS estimator. 
By contrast, when $\cW \subset [p]$ is a proper subset of $[p]$, we refer to the solution as the ``$\cW$-partially regularized'' OLS estimator.

\paragraph{Classical regime.} 
In the classical regime, defined by $n > p$, it is commonplace to assume that $\bX$ has full column rank, i.e., $\rank(\bX) = p$.  
Accordingly, the set $\cS$ is a singleton so the fully regularized and partially regularized OLS estimators exactly coincide. 
The resulting solution is often written as 
\begin{align}
	\hbbeta &= (\bX^\top \bX)^{-1} \bX^\top \by. \label{eq:beta.classical} 
\end{align}  
Algebraic and statistical properties of \eqref{eq:beta.classical} have been extensively studied in the literature. 

\paragraph{High-dimensional regime.} 
This article focuses on the high-dimensional regime, defined by $p > n$, in which $\bX$ is typically assumed to have full row rank, i.e., $\rank(\bX) = n$. 
In this setting, $\cS$ admits infinitely many solutions with each solution perfectly fitting or \textit{interpolating} the in-sample data. 
When $\cW = [p]$, the fully regularized OLS solution is generally written as 
\begin{align}
    \hbbeta \coloneqq \hbbeta^{[[p]]} = \bX^{\top} (\bX \bX^{\top})^{-1} \by. \label{eq:high.full}
\end{align}
There has been significant progress made in recent years towards understanding the solution given by \eqref{eq:high.full}, which we will henceforth refer as the fully regularized OLS interpolator (or simply, the OLS interpolator for short), under a particular set of stochastic assumptions on the data generating process (DGP) \citep{bartlett2020benign, belkin20, muthukumar2020harmless, hastie_22, mallinar2024minimumnorm, shen2024}. 
The primary focus of most of these works is the out-of-sample risk of the OLS interpolator.

Compared to its fully regularized counterpart, the partially regularized OLS interpolator has received far less attention in the literature. 
To study this estimator, let us first partition $\bX$ into $\bX = [\bW, \bT]$, where $\bW \in \Rb^{n \times q}$ and $\bT \in \Rb^{n \times m}$ with $q + m = p$.
We now place the following assumption on $\bX$: 
\begin{enumerate}[label=(\texttt{A\arabic*}), leftmargin=2em]
    \setcounter{enumi}{0}
    \item \label{assump:partial} 
    Let $\bW$ and $\bT$ have full row rank and full column rank, respectively. 
    That is, $\rank(\bW) = n < q$ and $\rank(\bT) = m < n$. 
\end{enumerate}
Note that Assumption~\ref{assump:partial} is a stronger assumption than the canonical full row rank assumption of $\bX$. 
As shown in \citet{shen2024}, when Assumption~\ref{assump:partial} holds, then the $\cW$-partially regularized OLS interpolator has the following decompositional form: 
\begin{align}
	\hbbeta^{[\cW]} &= 
	\begin{pmatrix}
		\hbbeta^{[\cW]}_{\cW}
		\\
		\hbbeta^{[\cW]}_{\cW^c}
	\end{pmatrix} 
	= \begin{pmatrix}
		(\bP_{\bT}^{\perp} \bW)^{\dagger} \bP_{\bT}^{\perp}
		\\
		(\bW^\dagger \bT)^\dagger \bW^\dagger
	\end{pmatrix} 
	\by, \label{eq:beta.fwl.hd} 
\end{align}
where $\hbbeta^{[\cW]}_{\cW}$ and $\hbbeta^{[\cW]}_{\cW^c}$ are the coefficients indexed by $\cW$ and its complement set $\cW^c$, respectively; 
$\dagger$ is the Moore-Penrose pseudoinverse;
$\bP_{\bT} \coloneqq \bT \bT^{\dagger}$ is the projection matrix onto the column space of $\bT$ and $\bP_{\bT}^\perp$ its orthogonal complement. 
In words, \eqref{eq:beta.fwl.hd} is obtained by penalizing the coefficients of $\bW$ and excluding the coefficients of $\bT$ from regularization. 
The decompositional form given in \eqref{eq:beta.fwl.hd} can be viewed as an extension of the Frisch-Waugh-Lovell (FWL) theorem \citep{frisch_waugh, lovell} from the classical regime to the high-dimensional regime. 

\begin{remark} [Alternative expression] \label{rmk:fwl.j.hd.1}
  \citet{shen2024} showed an alternative expression for the regularized coefficients in \eqref{eq:beta.fwl.hd}:  $\hbbeta^{[\cW]}_{\cW}=\bP_{\bW^{\top}} \bP_{\bW^{\dagger} \bT}^{\perp} \bW^{\dagger} \by$. 
  This expression is frequently used throughout the proofs in this paper.    
\end{remark} 

\subsection{Why Partial Regularization?}\label{sec:intro.moti}

The primary focus of this article is to study the partially regularized OLS interpolator given by \eqref{eq:beta.fwl.hd}. 
We motivate our study with two illustrative examples. 

\paragraph{Ridge regression.}
Consider ridge regression with $\by$ and $\bX = [\bW,\bone] \in \Rb^{n \times p}$, where $\bone$ is the $n$-vector of all ones and $\bW$ represents the covariates. 
Formally, for any $\lambda > 0$, the ridge solution is defined as  
\begin{align}
	\hbbeta^\lambda &= \argmin_{\alpha_0 \in \Rb, \balpha \in \Rb^{q}} \| \by - ( \bW \balpha+\alpha_0 \bone) \|_2^2 + \lambda \| \balpha \|_2^2. \label{eq:ridge} 
\end{align}
As noted in \citet{elements_of_stat_learning}, the intercept, $\alpha_0$, is not penalized in the ridge formulation of \eqref{eq:ridge} to prevent a dependency on the choice of origin for $\by$. 
Since the minimum $\ell_2$-norm OLS estimator is equivalent to the limit of ridge regression as the tuning parameter $\lambda$ tends to zero, it is reasonable to also exclude the intercept in the least squares formulation of \eqref{eq:min_l2_norm_sol}, in which $\cW$ indexes the coefficients of $\bW$; this leads to the solution given by \eqref{eq:beta.fwl.hd}, where $\cW^c$ indexes the coefficients of $\bT = \bone$. 

\paragraph{Treatment effect estimation.} 
As further motivation, we turn to a well-studied problem from the causal inference literature. 
Consider $n$ units. 
For each unit $i \in [n]$, let $\bw_i \in \Rb^q$ denote the vector of covariates, $Y_i(1), Y_i(0) \in \Rb$ denote the potential outcomes with and without treatment, respectively, and $D_i \in \{0,1\}$ denote the binary treatment indicator. 
Let $Y_i = D_i Y_i(1) + (1-D_i) Y_i(0)$ denote the observed outcome for unit $i$. 
The objective is to estimate the average treatment effect (ATE) defined by 
$\tau = \bbE[Y(1) - Y(0)]$. 

A canonical approach is to posit a linear outcome model \citep{ding2023coursecausalinference}: 
$\bbE[\by | \bD, \bW] =  \bW \balpha+\tau \bD + \alpha_0 \bone$, 
where $\by$, $\bW$, and $\bD$ collect the observed outcomes, covariates, and treatment assignments, respectively. 
Practitioners often regress $\by$ on $[\bW, \bD, \bone]$ and define the ATE estimate as the coefficient associated with $\bD$. 
This problem is well studied in the classical regime where $p = q+2 < n$, but has yet to be explored in the high-dimensional regime where $p > n$. 
As aforementioned, there are two distinct regression-based estimates in the high-dimensional setting: full regularization and partial regularization. 
In the former, all regression coefficients are penalized while in the latter, only the regression coefficients associated with the covariates, $\bW$, are penalized, i.e., under partial regularization, $\bT = [\bD, \bone] \in \{0,1\}^{n \times 2}$. 

We conduct a simulation based on the linear outcome model and present the biases induced by the fully regularized and partially regularized estimates in Figure~\ref{fig:motivating_eg}. 
Our empirical results demonstrate that the partially regularized OLS interpolator generally yields significantly lower bias compared to its fully regularized counterpart. 
To the best of our knowledge, the partially regularized OLS interpolator is a novel approach for the high-dimensional analog of this age-old problem and warrants further investigation. 
This example also highlights a common objective in causal inference applications: parameter estimation, which stands in contrast to many machine learning tasks that view prediction as the primary object of study. 
Similar to the discussion above, this example highlights the importance of appropriately defining $\bW$ and $\bT$ as their constructions can have substantive implications. 
\begin{figure}[t!]
    \vspace{.15in}
    \centering
    \includegraphics[width=.75\linewidth]{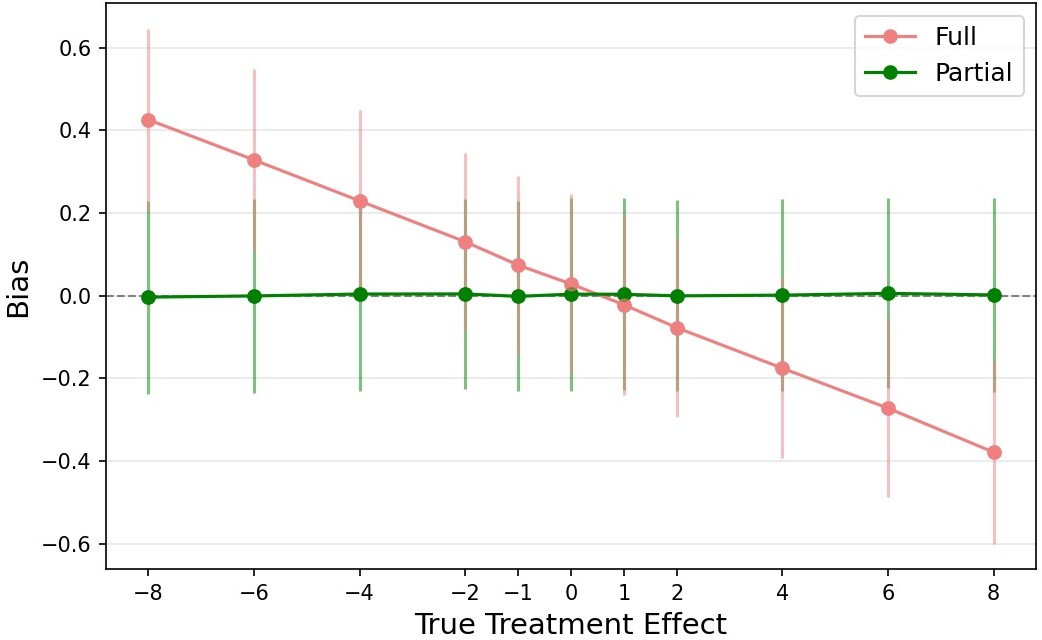}
    \vspace{.3in}
    \caption{Biases in estimating the ATE for fully (red) and partially (green) regularized OLS interpolators. Here, $(n,p)=(80,100)$, $\tau = [0, \pm 1, \pm 2, \pm 4, \pm 6, \pm 8]$, $\bW$ is generated from a spiked covariance model, $\bD \sim \text{Bernoulli}(0.5)^n$, and standard normal noise is added. See supplementary materials for details.}\label{fig:motivating_eg}
    \vspace{-.1in}
\end{figure}

\subsection{Contributions and Organization}
Our results can be divided into two parts: algebraic and statistical. 
The first set of results are derived purely from algebraic principles, independent of any stochastic assumptions on the DGP. 
To arrive at our second set of results, we anchor on a particular DGP and leverage our algebraic results from part one. 

\paragraph{Algebraic results.} In Section \ref{sec:col_par}, we provide a high-dimensional, partially regularized counterpart to Cochran's formula (Theorem \ref{thm:cochran's}). We show that Cochran's formula, originally established in the classical regime, continues to hold in the high-dimensional regime under partial regularization. We discuss implications of this result for treatment effect estimation in observational studies when critical variables are omitted from the OLS model. 
In Section \ref{sec:row_par}, we present closed-form leave-one-out (LOO) formulas (Proposition \ref{prop:loo}) and residuals (Corollary \ref{cor:loo.residual}) in the high-dimensional partially regularized setting. These results provide insights into the generalization behavior of the partially regularized OLS interpolator and help clarify the similarities and differences between the fully and partially regularized OLS interpolators.

\paragraph{Stochastic results.} In Section \ref{sec:stat_infer}, we study the partially regularized OLS interpolator under the Gauss-Markov model. 
In particular, we design three homoskedastic variance estimators and analyze their biases (Theorems~\ref{thm:var.estimator.pr} and \ref{thm:var.estimator.partial}). 
In Section \ref{sec:simulation}, we investigate the finite-sample performance of these estimators through illustrative simulation studies under various generative models and from different perspectives, offering practical insights for practitioners. 

In Section \ref{sec:conclusion}, we summarize our findings and discuss several avenues of future research. 


\paragraph{Notation.} For any matrix $\bM \in \Rb^{n \times p}$, let $\bP_{\bM} \coloneqq \bM \bM^{\dagger}$ be the projection matrix onto its column space and $\bP_{\bM}^{\perp} = \bI_n - \bP_{\bM}$ the projection onto its orthogonal complement. Denote $\Gram_{\bM} = (\bM \bM^\top)^{\dagger} \in \Rb^{n \times n}$ as the ``inverse Gram matrix" of $\bM$. 
Let $\be_i$ denote the $i$-th standard basis vector in $\Rb^n$ and $\diag(\bM) \coloneqq \sum_{i=1}^n M_{ii} \be_i \be_i^\top$ for $\bM \in \Rb^{n \times n}$.

\section{Cochran's formula}\label{sec:col_par}
This section establishes the high-dimensional analog of Cochran's formula under the partial regularization framework. 
Towards this, we recall Assumption~\ref{assump:partial}. 
We then decompose $\bW$ as $\bW = [\bZ, \bU]$, where $\bZ \in \Rb^{n \times \ell}$ and $\bU \in \Rb^{n \times r}$ with $\ell + r = q$. 
We state an added assumption that implies Assumption~\ref{assump:partial}: 
\begin{enumerate}[label=(\texttt{A\arabic*}), leftmargin=2em]
    \setcounter{enumi}{1}
    \item \label{assump:partial_3} 
    Let $\bT$ retain its full row rank. 
    Further, let $\bZ$ have full row rank and $\bU$ have full column rank, i.e., $\rank(\bZ) = n < \ell$ and $\rank(\bU) = r < n$. 
\end{enumerate}
Consider the following OLS formulations: 
\begin{align}
    \cS_1 &\coloneqq \argmin_{\balpha \in \Rb^\ell, \bgamma \in \Rb^r, \btau \in \Rb^m} \| \by - (\bZ \balpha + \bU \bgamma + \bT \btau) \|_2^2, \label{eq:column.s1} \\
    \cS_2 &\coloneqq \argmin_{\balpha \in \Rb^\ell, \btau \in \Rb^m} \| \by - (\bZ \balpha + \bT \btau)\|_2^2, \label{eq:column.s2} \\
    \cS_3 &\coloneqq \argmin_{\bDelta \in \Rb^{\ell \times r}, \bdelta \in \Rb^{m \times r}} \| \bU - (\bZ \bDelta + \bT \bdelta) \|_F^2, \label{eq:column.s3}
\end{align} 
where $\| \cdot \|_F$ denotes the Frobenius norm. 
In the high-dimensional regime, $\cS_1$ to $\cS_3$ contain infinitely many solutions. 
As such, we define the associated partially regularized OLS solutions as 
\begin{align}
\begin{pmatrix}
    \hbalpha \\
    \hbgamma \\
    \hbtau
\end{pmatrix}
= \argmin_{(\balpha, \bgamma, \btau) \in \cS_1} \left( \|\balpha\|_2^2 + \|\bgamma\|_2^2 \right), 
\\
\begin{pmatrix}
    \tbalpha \\
    \tbtau
\end{pmatrix}
= \argmin_{(\balpha, \btau) \in \cS_2}  \|\balpha\|_2^2 , ~ 
\begin{pmatrix}
    \hbDelta \\
    \hbdelta
\end{pmatrix}
= \argmin_{(\bDelta, \bdelta) \in \cS_3} & \|\bDelta\|_F^2, \label{eq:partial_in_set}
\end{align}
which all exclude the coefficients of $\bT$ from penalization. 
Armed with these definitions, we are ready to state the following result. 

\begin{theorem}[Cochran's formula] \label{thm:cochran's} 
If Assumption \ref{assump:partial_3} holds, then for any tuples $(\hbalpha, \hbgamma, \hbtau) \in \cS_1$, $(\tbalpha, \tbtau) \in \cS_2$, and $(\hbDelta, \hbdelta) \in \cS_3$, we have
\begin{equation}
    \bZ \tbalpha + \bT \tbtau = \bZ (\hbalpha + \hbDelta \hbgamma) + \bT (\hbtau + \hbdelta \hbgamma). \label{eq:cochran_image}
\end{equation}
If the tuples $(\hbalpha, \hbgamma, \hbtau)$, $(\tbalpha, \tbtau)$, and $(\hbDelta, \hbdelta)$ are each the partially regularized OLS estimators defined in \eqref{eq:partial_in_set}, then we further have 
\begin{align}
    \begin{pmatrix}
        \tbalpha \\
        \tbtau
    \end{pmatrix}
    =
     \begin{pmatrix}
        \hbalpha \\
        \hbtau
    \end{pmatrix}
	+
    \begin{pmatrix}
        \hbDelta \\
        \hbdelta
    \end{pmatrix}
    \hbgamma. \label{eq:cochran}
\end{align}    
\end{theorem}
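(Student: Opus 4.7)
The plan is to handle the two parts of the theorem separately. For the image equality \eqref{eq:cochran_image}, the key observation is that Assumption~\ref{assump:partial_3} forces $\bZ$ to have full row rank, so $\colsp(\bZ) = \Rb^n$ and each of the three residual sums of squares in \eqref{eq:column.s1}--\eqref{eq:column.s3} attains value zero. Every tuple in $\cS_1$, $\cS_2$, $\cS_3$ therefore satisfies its interpolation identity: $\by = \bZ\hbalpha + \bU\hbgamma + \bT\hbtau$, $\by = \bZ\tbalpha + \bT\tbtau$, and $\bU = \bZ\hbDelta + \bT\hbdelta$. Substituting the third into the first and comparing with the second immediately yields \eqref{eq:cochran_image}, independently of any minimum-norm choice.

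For the stronger coefficient equality \eqref{eq:cochran}, I would invoke KKT conditions for the three convex quadratic programs that define the partially regularized estimators. Each program minimizes the squared norm of its regularized coefficients subject to the corresponding interpolation constraint. The stationarity conditions then force every regularized coefficient block to equal the transpose of its design block multiplied by a dual vector that is constrained to $\colsp(\bT)^\perp$ (reflecting that $\bT$ carries no penalty). Concretely, $\tbalpha = \bZ^\top\blambda$, $\hbalpha = \bZ^\top\blambda_1$, $\hbgamma = \bU^\top\blambda_1$, and column-wise $\hbDelta = \bZ^\top\bLambda_2$, where $\blambda, \blambda_1 \in \colsp(\bT)^\perp$ and every column of $\bLambda_2$ lies in $\colsp(\bT)^\perp$. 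Assembling these facts, both $\tbalpha$ and $\hbalpha + \hbDelta\hbgamma$ lie in $\colsp(\bZ^\top\bP_\bT^\perp)$.

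To close the argument, define $\bv := \tbalpha - \hbalpha - \hbDelta\hbgamma$ and $\bu := \tbtau - \hbtau - \hbdelta\hbgamma$. The image equality from part one gives $\bZ\bv = -\bT\bu \in \colsp(\bT)$, and hence $\bP_\bT^\perp\bZ\bv = 0$. Writing $\bv = \bZ^\top\bP_\bT^\perp\bc$ for some $\bc \in \Rb^n$, this becomes $\bP_\bT^\perp\bZ\bZ^\top\bP_\bT^\perp\bc = 0$; left-multiplying by $\bc^\top$ and using the symmetry and idempotence of $\bP_\bT^\perp$ yields $\|\bZ^\top\bP_\bT^\perp\bc\|_2^2 = 0$, so $\bv = 0$ and hence $\tbalpha = \hbalpha + \hbDelta\hbgamma$. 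Since $\bT$ has full column rank by Assumption~\ref{assump:partial_3}, the remaining identity $\bT\bu = 0$ forces $\bu = 0$, which is the second half of \eqref{eq:cochran}.

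The main obstacle, as I see it, is isolating the common subspace structure shared across the three different minimum-norm problems, namely that every regularized coefficient block lives in the image of $\bZ^\top$ restricted to $\colsp(\bT)^\perp$. This is precisely what upgrades an equality of fitted values into an equality of coefficients. A more computational alternative would be to substitute the closed-form expression \eqref{eq:beta.fwl.hd} (or the variant in Remark~\ref{rmk:fwl.j.hd.1}) into both sides of \eqref{eq:cochran} and simplify algebraically, but this route obscures the underlying geometry and tends to be heavier to verify.
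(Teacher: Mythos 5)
Your proof is correct, and the second half takes a genuinely different route from the paper's. For the image identity \eqref{eq:cochran_image} you argue exactly as the paper does: full row rank of $\bZ$ forces all three least-squares problems to attain zero, and substituting the interpolation identity for $\bU$ into that for $\cS_1$ gives the result. For the coefficient identity \eqref{eq:cochran}, however, the paper proceeds computationally: it derives explicit GLS-type formulas $\hbtau = (\bT^\top \Gram_{\bW}\bT)^\dagger \bT^\top \Gram_{\bW}\by$, $\tbtau = (\bT^\top \Gram_{\bZ}\bT)^\dagger \bT^\top \Gram_{\bZ}\by$, $\hbdelta = (\bT^\top \Gram_{\bZ}\bT)^\dagger \bT^\top \Gram_{\bZ}\bU$, and $\hbgamma = \bU^\top\Gram_{\bW}(\by - \bT\hbtau)$, then relates $\Gram_{\bW}$ to $\Gram_{\bZ}$ via the Sherman--Morrison--Woodbury identity and verifies $\tbtau - \hbtau = \hbdelta\hbgamma$ by direct expansion, finally recovering the $\balpha$-identity from \eqref{eq:cochran_image}. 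Your KKT argument replaces all of that algebra with the structural observation that every penalized block lies in $\bZ^\top\bigl(\colsp(\bT)^\perp\bigr)$, so that the difference $\bv = \tbalpha - \hbalpha - \hbDelta\hbgamma$ can be written as $\bZ^\top\bP_{\bT}^\perp\bc$ and annihilated by the quadratic-form argument $\bc^\top\bP_{\bT}^\perp\bZ\bZ^\top\bP_{\bT}^\perp\bc = \|\bv\|_2^2 = 0$; the $\btau$-identity then follows from full column rank of $\bT$. This is cleaner and, notably, more careful than the paper's own closing step, which deduces $\tbalpha - \hbalpha = \hbDelta\hbgamma$ from $\bZ(\tbalpha - \hbalpha) = \bZ\hbDelta\hbgamma$ by appealing to ``full row rank'' of $\bZ$ --- a wide matrix is not injective, so that step implicitly relies on precisely the subspace structure you make explicit. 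What the paper's route buys in exchange is the set of explicit closed-form expressions (e.g., \eqref{eq:fwl.j.hd.alt} and \eqref{eq:fwl.jc.hd.gls}), which it reuses in the proofs of Proposition~\ref{prop:loo} and Corollary~\ref{cor:loo.residual}; your route proves the theorem but does not produce those formulas as a by-product.
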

Theorem~\ref{thm:cochran's} is a purely linear algebraic fact that is independent of any statistical model. 
Moreover, \eqref{eq:cochran_image} and \eqref{eq:cochran} can be viewed as extensions of Cochran's formula, attributed to \citet{cochran} and established in the classical regime, to the high-dimensional setting. 
In this view, \eqref{eq:cochran_image}, which states that the projections onto the images of $\bZ$ and $\bT$ are preserved for any of the infinitely many solutions in the sets $\cS_1$ to $\cS_3$, can be interpreted as the predictive counterpart of Cochran's formula. 
Cochran's formula, originally proven for the regression coefficients, can be reinstated in the high-dimensional setting provided the solutions are partially regularized as per the constraints in \eqref{eq:partial_in_set}. 

\paragraph{Example application: treatment effect estimation.}
Within the classical regime, econometricians famously refer to Cochran's formula as the {\em omitted-variable} (OVB) bias formula.
%
To describe the OVB problem, recall the example from Section~\ref{sec:intro.moti} that aims to estimate the ATE. 
A challenge in estimating the ATE in observational studies is unmeasured confounding, i.e., when a crucial set of variables, $\bU$, with respect to which the treatment and control units may differ, are unobserved and thus omitted from the OLS model.
Consequently, the OLS coefficient of the treatment assignments, $\bD$, is biased when the unobserved confounders $\bU$ are related to both $\bD$ and outcomes $\by$, even after conditioning on the covariates. 
Our simulation results in Figure~\ref{fig:motivating_eg} suggest that partial regularization, compared to full regularization, is the more suitable  regression formulation in the high-dimensional setting. 
Thus, Theorem~\ref{thm:cochran's} provides a quantification of the bias induced by omitting $\bU$. 
\begin{corollary} \label{cor:ovb_cochran}
Let $\bZ$ denote the observed confounders, $\bU$ the unobserved confounders, $\bT = [\bD, \bone]$ the collection of treatment assignments and intercept, and $\by$ the observed outcomes. 
Then, \eqref{eq:cochran_image} and \eqref{eq:cochran} of Theorem \ref{thm:cochran's} hold under appropriate conditions.
\end{corollary}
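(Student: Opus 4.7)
The plan is to recognize Corollary~\ref{cor:ovb_cochran} as a direct specialization of Theorem~\ref{thm:cochran's} to the causal-inference setup, so the entire task reduces to pinning down what the ``appropriate conditions'' are and verifying that, under them, Assumption~\ref{assump:partial_3} holds for the partition $\bW = [\bZ, \bU]$ and $\bT = [\bD, \bone]$. Once that reduction is made, \eqref{eq:cochran_image} and \eqref{eq:cochran} follow verbatim from Theorem~\ref{thm:cochran's}.

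First I would translate each of the three rank hypotheses of Assumption~\ref{assump:partial_3} into a substantive condition on the observational study. The requirement that $\bT = [\bD, \bone] \in \Rb^{n \times 2}$ has full column rank is just that $\bD$ is non-constant, i.e., at least one unit is treated and at least one is a control; without this, the ATE is not even identifiable from the regression in question. The condition $\rank(\bZ) = n < \ell$ asks the observed confounder matrix to be sufficiently wide with full row rank, consistent with the high-dimensional regime driving the rest of the paper; this holds, for example, whenever the rows of $\bZ$ are in general position and $\ell > n$. Finally, $\rank(\bU) = r < n$ requires the unobserved confounder matrix to be low-dimensional with linearly independent columns, a natural structural assumption when unmeasured confounding is driven by a modest number of latent factors.

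Next I would match the three regression formulations \eqref{eq:column.s1}--\eqref{eq:column.s3} to the causal setup: the long regression \eqref{eq:column.s1} regresses $\by$ on the full set $[\bZ, \bU, \bT]$; the short regression \eqref{eq:column.s2} omits the unobserved $\bU$; and the auxiliary regression \eqref{eq:column.s3} regresses the omitted $\bU$ on $[\bZ, \bT]$. Under the conditions above, Theorem~\ref{thm:cochran's} applies directly and yields both conclusions. Reading off the $\btau$-block of \eqref{eq:cochran} recovers the familiar OVB decomposition: the coefficient of $\bT = [\bD, \bone]$ in the short regression equals the coefficient of $\bT$ in the long regression plus $\hbdelta \hbgamma$, where $\hbdelta$ captures the association between $\bU$ and $\bT$ after partialling out $\bZ$ and $\hbgamma$ captures the association between $\bU$ and $\by$ in the long regression.

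There is no substantive obstacle in the proof itself, since Theorem~\ref{thm:cochran's} does all the algebraic work. The only delicate aspect is the articulation of which primitive conditions on the data-generating process guarantee full row rank of the observed confounder matrix $\bZ$ (high dimensionality $\ell > n$ alone is not sufficient; some genericity of the rows is required) and full column rank of the latent matrix $\bU$ (which presumes a well-posed factor structure for unmeasured confounding). These conditions are precisely the ``appropriate conditions'' referenced in the statement.
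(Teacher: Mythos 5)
Your proposal is correct and matches the paper's treatment: the paper gives no separate proof of Corollary~\ref{cor:ovb_cochran}, treating it as an immediate instantiation of Theorem~\ref{thm:cochran's} once Assumption~\ref{assump:partial_3} is imposed on the partition $\bW=[\bZ,\bU]$, $\bT=[\bD,\bone]$, which is exactly the reduction you carry out. Your identification of the ``appropriate conditions'' ($\bT$ of full column rank, i.e.\ $\bD$ non-constant; $\rank(\bZ)=n<\ell$; $\rank(\bU)=r<n$) and the OVB reading of the $\btau$-block are consistent with the paper's discussion following the corollary.
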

With $\bT = [\bD, \bone]$, let $\htau_D$, $\ttau_D$, and $\hbdelta_D$ denote the coefficients of $\bD$ for $\cS_1$, $\cS_2$, and $\cS_3$, respectively, as defined in \eqref{eq:partial_in_set}. 
Corollary \ref{cor:ovb_cochran} then states that $\ttau_D - \htau_D = \hbdelta_D \hbgamma$.
This indicates that the bias in the treatment effect estimates is influenced by two factors: $\hbgamma$ describes how different subgroups of the unobserved confounder \textit{impact} the outcome prediction, while $\hbdelta_d$ captures the \textit{imbalance} between the treatment and control groups in terms of the linear expectation of the unobserved confounder.

\section{Leave-One-Out Analysis} \label{sec:row_par}
In this section, we focus on the leave-one-out (LOO) configuration, which is a useful framework to study the stability of an estimator. 

\paragraph{Notation.} 
We inherit the setting of Assumption~\ref{assump:partial} to recall \eqref{eq:beta.fwl.hd}.  
To avoid cluttered notation, let $\hblambda \coloneqq \hbbeta^{[\cW]}_{\cW}$ and $\hbtau \coloneqq \hbbeta^{[\cW]}_{\cW^c}$. 
For any $i \in [n]$, we denote $\bW_{\sim i}$, $\bT_{\sim i}$, and $\by_{\sim i}$ as the leave-$i$-out data with the $i$-th row/entry removed. 
Let $\hblambda^{(\sim i)}$ and $\hbtau^{(\sim i)}$ denote the partially regularized solutions from regressing $\by_{\sim i}$ on $[\bW_{\sim i}, \bT_{\sim i}]$.  
 
\subsection{LOO Regression Coefficients}
\begin{proposition}\label{prop:loo} 
For any $i \in [n]$, denote  
\begin{align}
    \bP_{\bW^\dagger \be_i} = \frac{\bW^\dagger \be_i \be_i^\top \bW^{\dagger,\top}}{\be_i^\top \Gram_{\bW} \be_i}  \label{mat:Pi}
\end{align}
as the projection matrix onto the column space of $\bW^\dagger \be_i$ and $\bP^\perp_{\bW^\dagger \be_i}$ its orthogonal complement. 
Let $\tbW_i = \bP_{\bW^\dagger \be_i}^\perp \bW^\dagger$.  
If Assumption~\ref{assump:partial} holds, then
\begin{align}
    \hblambda^{(\sim i)} &= \bW^\top \tbW_i^\top \bP_{\tbW_i\bT}^\perp \tbW_i \by \label{eq:loo_beta} 
    \\
    \hbtau^{(\sim i)} &= (\tbW_i\bT)^\dagger \tbW_i \by. \label{eq:loo_tau} 
\end{align}
\end{proposition}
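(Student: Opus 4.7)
The plan is to relate the leave-$i$-out pseudoinverse $\bW_{\sim i}^\dagger$ to the matrix $\tbW_i = \bP_{\bW^\dagger \be_i}^\perp \bW^\dagger$ acting on full-data vectors, and then invoke the full-data formula \eqref{eq:beta.fwl.hd} (together with its alternative form in Remark~\ref{rmk:fwl.j.hd.1}) applied to the leave-$i$-out data $(\bW_{\sim i}, \bT_{\sim i}, \by_{\sim i})$. The benefit is that once the ``bridge lemma'' $\tbW_i \bv = \bW_{\sim i}^\dagger \bv_{\sim i}$ is established for every $\bv \in \RR^n$, both \eqref{eq:loo_beta} and \eqref{eq:loo_tau} reduce to direct substitution.

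To prove the bridge lemma, I would first establish the orthogonal decomposition $\colsp(\bW^\top) = \colsp(\bW_{\sim i}^\top) \oplus \vspan(\bW^\dagger \be_i)$, where the sum is orthogonal. Two ingredients suffice: (i) $\bW^\dagger \be_i \in \colsp(\bW^\top)$, immediate from $\bW^\dagger = \bW^\top (\bW \bW^\top)^{-1}$ under full row rank; and (ii) for any $j \neq i$, the $j$-th row $\bw_j^\top$ of $\bW$ satisfies $\bw_j^\top \bW^\dagger \be_i = (\bW \bW^\dagger)_{j,i} = 0$, since $\bW \bW^\dagger = \bI_n$. Now for any $\bv \in \RR^n$, the vector $\bW^\dagger \bv$ lies in $\colsp(\bW^\top)$, so $\bP_{\bW^\dagger \be_i}^\perp \bW^\dagger \bv$ is precisely its orthogonal projection onto $\colsp(\bW_{\sim i}^\top)$. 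Furthermore, (ii) yields $\bW_{\sim i} \bP_{\bW^\dagger \be_i} = 0$, so that $\bW_{\sim i} \tbW_i \bv = \bW_{\sim i} \bW^\dagger \bv = \bv_{\sim i}$. Hence $\tbW_i \bv$ is the minimum-norm solution to $\bW_{\sim i} \balpha = \bv_{\sim i}$, i.e., $\bW_{\sim i}^\dagger \bv_{\sim i}$.

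Applying the bridge lemma column-wise gives $\bW_{\sim i}^\dagger \bT_{\sim i} = \tbW_i \bT$ and $\bW_{\sim i}^\dagger \by_{\sim i} = \tbW_i \by$. Substituting into \eqref{eq:beta.fwl.hd} applied to $(\bW_{\sim i}, \bT_{\sim i}, \by_{\sim i})$ immediately delivers \eqref{eq:loo_tau}, namely $\hbtau^{(\sim i)} = (\tbW_i \bT)^\dagger \tbW_i \by$. For \eqref{eq:loo_beta}, I would invoke the alternative expression from Remark~\ref{rmk:fwl.j.hd.1}: $\hblambda^{(\sim i)} = \bP_{\bW_{\sim i}^\top} \bP_{\bW_{\sim i}^\dagger \bT_{\sim i}}^\perp \bW_{\sim i}^\dagger \by_{\sim i}$. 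The remaining step is to verify $\bP_{\bW_{\sim i}^\top} = \bW^\top \tbW_i^\top$: this follows from $\bW^\top \tbW_i^\top = \bP_{\bW^\top} \bP_{\bW^\dagger \be_i}^\perp = \bP_{\bW^\top} - \bP_{\bW^\dagger \be_i}$, which equals $\bP_{\bW_{\sim i}^\top}$ by the same orthogonal decomposition.

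The main obstacle I anticipate is isolating and justifying the right orthogonal decomposition of $\colsp(\bW^\top)$; once that geometric picture is in hand, the rest is essentially bookkeeping. A small technical caveat is that the argument implicitly assumes the leave-$i$-out data continues to satisfy Assumption~\ref{assump:partial}: the rank condition $\rank(\bW_{\sim i}) = n-1 < q$ is free from the decomposition above, whereas $\rank(\bT_{\sim i}) = m$ holds under a mild generic-position hypothesis on $\bT$ that would be worth stating explicitly.
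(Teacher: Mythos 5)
Your proof is correct, but it takes a genuinely different route from the paper's. The paper's argument is computational: it establishes $(\bW_{\si})^\dagger \bW_{\si} = (\bI_q - \bP_i)\bW^\dagger\bW$ by applying Meyer's rank-one pseudoinverse update to $\bW^\top\bW - \bw_i\bw_i^\top$, introduces auxiliary fully regularized regressions $\bT = \bW\hbphi$, $\by = \bW\hbpsi$ together with the known full-regularization LOO coefficient formula from \citet{shen2024}, and then pushes the leave-one-out quantities through the alternative representations \eqref{eq:fwl.jc.hd.gls} and \eqref{eq:fwl.j.hd.alt} using commutation identities between $\bP_i$ and a companion projector $\bQ_i = \be_i\be_i^\top\Gram_{\bW}/(\be_i^\top\Gram_{\bW}\be_i)$. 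Your ``bridge lemma'' $\tbW_i\bv = (\bW_{\si})^\dagger\bv_{\si}$, resting on the orthogonal decomposition $\colsp(\bW^\top) = \colsp(\bW_{\si}^\top)\oplus\vspan(\bW^\dagger\be_i)$, replaces all of that machinery with one geometric fact, after which both claims are substitution into \eqref{eq:beta.fwl.hd} and Remark~\ref{rmk:fwl.j.hd.1}; each step you cite checks out, including $\bW^\top\tbW_i^\top = \bP_{\bW^\top} - \bP_{\bW^\dagger\be_i} = \bP_{\bW_{\si}^\top}$. Your approach is shorter, avoids Meyer's theorem entirely, and makes transparent \emph{why} substituting $\tbW_i$ for $\bW^\dagger$ yields the leave-$i$-out estimator — the heuristic stated but not really explained after the proposition. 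What the paper's longer route buys is the intermediate identities (the operator $\bQ_i$ and the expression \eqref{eq:lambda_loo_mid}) that are reused verbatim in the proof of Corollary~\ref{cor:loo.residual}. Your caveat about $\rank(\bT_{\si}) = m$ is well taken but not a defect relative to the paper: its proof applies the same full-sample formulas to the leave-$i$-out data and thus carries the identical implicit hypothesis.
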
 
By replacing $\bW^\dagger$ with its projection onto the orthogonal complement of $\text{colsp}(\bW^\dagger \be_i)$, i.e., $\tbW_i$, in the full-sample estimator formulas in \eqref{eq:beta.fwl.hd}, we obtain the corresponding leave-$i$-out estimators.
For a comparison with the regularized coefficients, recall the alternative expression in Remark \ref{rmk:fwl.j.hd.1} and note that $\bP_{\bW^\top} \bP_{\bW^\dagger \bT}^\perp \bW^\dagger \by = \bW^{\top} \bW^{\top, \dagger} \bP_{\bW^\dagger \bT}^\perp \bW^\dagger \by$. 
%

\subsection{LOO Prediction Residuals}
LOO prediction residuals are useful for assessing how well a model generalizes to unseen data. We represent the leave-$i$-out prediction residual as $$\tvarepsilon_i\coloneqq y_i - (\bw_i^{\top} \hblambda^{(\sim i)} + \bt_i^\top \hbtau^{(\sim i)}),$$ where $\bw_i^\top$ and $\bt_i^\top$ denote the $i$-th rows of $\bW$ and $\bT$, respectively. By leveraging Proposition \ref{prop:loo}, we can derive the formula for the LOO prediction residuals in the partial regularization setting.
\begin{corollary} \label{cor:loo.residual} 
Let $\bH_i = \bT (\tbW_i \bT)^\dagger \tbW_i$, 
where $\tbW_i$ is defined as in Proposition \ref{prop:loo}. If Assumption~\ref{assump:partial} holds, then we have
\begin{equation}
    \tbvarepsilon_i 
    =\be_i^\top \big[ \diag(\Gram_{\bW}) \big]^{-1}  \Gram_{\bW} (\bI_n - \bH_i)  \by.  \label{eq:loo_single}
\end{equation}
\end{corollary}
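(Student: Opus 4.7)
The approach is to substitute the closed-form LOO estimators from Proposition~\ref{prop:loo} into the residual, use projection identities to factor it as $\be_i^\top (\bI_n - \bW \tbW_i)(\bI_n - \bH_i) \by$, and then compute $\bI_n - \bW \tbW_i$ explicitly to extract the stated form.

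Starting from $\tvarepsilon_i = y_i - \bw_i^\top \hblambda^{(\sim i)} - \bt_i^\top \hbtau^{(\sim i)}$, the term $\bt_i^\top \hbtau^{(\sim i)} = \be_i^\top \bH_i \by$ is already in the target form, so the work lies in rewriting $\bw_i^\top \hblambda^{(\sim i)} = \be_i^\top \bW \bW^\top \tbW_i^\top \bP_{\tbW_i \bT}^\perp \tbW_i \by$. Writing $\tbW_i^\top = \bW^{\dagger,\top} \bP_{\bW^\dagger \be_i}^\perp$ and using $\bW^\dagger \bW = \bP_{\bW^\top}$ together with $\bW \bP_{\bW^\top} = \bW$ simplifies the prefactor to $\bW \bW^\top \tbW_i^\top = \bW \bP_{\bW^\dagger \be_i}^\perp$. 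The outer $\bP_{\bW^\dagger \be_i}^\perp$ can then be absorbed: both $\tbW_i = \bP_{\bW^\dagger \be_i}^\perp \bW^\dagger$ and $\bP_{\tbW_i \bT} \tbW_i$ have columns in $\colsp(\bP_{\bW^\dagger \be_i}^\perp)$, hence $\bP_{\bW^\dagger \be_i}^\perp \bP_{\tbW_i \bT}^\perp \tbW_i = \bP_{\tbW_i \bT}^\perp \tbW_i$, yielding $\bw_i^\top \hblambda^{(\sim i)} = \be_i^\top \bW \bP_{\tbW_i \bT}^\perp \tbW_i \by$.

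Expanding $\bP_{\tbW_i \bT}^\perp = \bI_q - \tbW_i \bT (\tbW_i \bT)^\dagger$ and invoking the definition of $\bH_i$ gives $\bw_i^\top \hblambda^{(\sim i)} = \be_i^\top \bW \tbW_i (\bI_n - \bH_i) \by$, so combining with $\bt_i^\top \hbtau^{(\sim i)} = \be_i^\top \bH_i \by$ telescopes to
\begin{align*}
\tvarepsilon_i = \be_i^\top (\bI_n - \bW \tbW_i)(\bI_n - \bH_i) \by.
\end{align*}
Finally, using $\bW \bW^\dagger = \bI_n$ (full row rank of $\bW$) and $\bW^{\dagger,\top} \bW^\dagger = \Gram_\bW$, the definition of $\bP_{\bW^\dagger \be_i}$ in \eqref{mat:Pi} gives
\begin{align*}
\bI_n - \bW \tbW_i = \bW \bP_{\bW^\dagger \be_i} \bW^\dagger = \frac{\be_i \be_i^\top \Gram_\bW}{\be_i^\top \Gram_\bW \be_i},
\end{align*}
so $\be_i^\top (\bI_n - \bW \tbW_i) = \be_i^\top [\diag(\Gram_\bW)]^{-1} \Gram_\bW$ and the claim follows.

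\textbf{Main obstacle.} The technical crux is recognizing the factorization $\tvarepsilon_i = \be_i^\top (\bI_n - \bW \tbW_i)(\bI_n - \bH_i) \by$. Arriving at it requires the projection identity that drops the outer $\bP_{\bW^\dagger \be_i}^\perp$ from $\bw_i^\top \hblambda^{(\sim i)}$; without this simplification, the three raw terms in the residual do not visibly combine into an $(\bI_n - \bH_i)$ factor, nor is it obvious that the $\bW$-side contribution collapses to the rank-one matrix $\be_i \be_i^\top \Gram_\bW / (\Gram_\bW)_{ii}$.
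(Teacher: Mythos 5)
Your proof is correct, and it takes a noticeably more direct route than the paper's. The paper first rewrites the residual as $\bw_i^\top(\hblambda-\hblambda^{(\sim i)})+\bt_i^\top(\hbtau-\hbtau^{(\sim i)})$ using the full-sample interpolation identity $y_i=\bw_i^\top\hblambda+\bt_i^\top\hbtau$, then works with the GLS-style alternative expressions $\hblambda=\bW^\top\Gram_{\bW}(\by-\bT\hbtau)$ and $\hblambda^{(\sim i)}=\bW^\top\Gram_{\bW}(\bI_n-\bQ_i)(\by-\bT\hbtau^{(\sim i)})$, cancelling terms via $\bw_i^\top\bW^\dagger\bT=\bt_i^\top$ and the rank-one identity $\bw_i^\top\bW^\dagger\bQ_i=\be_i^\top[\diag(\Gram_{\bW})]^{-1}\Gram_{\bW}$. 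You instead substitute the Proposition~\ref{prop:loo} formulas verbatim, absorb the outer $\bP_{\bW^\dagger\be_i}^\perp$ by the range argument $\colsp(\bP_{\tbW_i\bT}^\perp\tbW_i)\subseteq\mathrm{range}(\bP_{\bW^\dagger\be_i}^\perp)$, and arrive at the clean factorization $\tvarepsilon_i=\be_i^\top(\bI_n-\bW\tbW_i)(\bI_n-\bH_i)\by$ with $\bI_n-\bW\tbW_i=\be_i\be_i^\top\Gram_{\bW}/(\Gram_{\bW})_{ii}$; this last matrix is precisely the paper's $\bQ_i$, so the two arguments share the same rank-one kernel but organize it differently. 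What your route buys is that it never needs the full-sample coefficients, the interpolation property, or the auxiliary expressions \eqref{eq:fwl.j.hd.alt} and \eqref{eq:fwl.jc.hd.gls}, and the factorization makes the structural analogy with the fully regularized formula \eqref{eq:loo.shortcut.hd} transparent; what the paper's route buys is that the intermediate quantity $\by-\bT\hbtau^{(\sim i)}$ appears explicitly, which mirrors the classical FWL-style derivation. All of your individual steps check out: the simplification $\bW\bW^\top\tbW_i^\top=\bW\bP_{\bW^\dagger\be_i}^\perp$ uses only $\bW\bP_{\bW^\top}=\bW$, and the final identification relies correctly on $\bW\bW^\dagger=\bI_n$ and $\bW^{\dagger,\top}\bW^\dagger=\Gram_{\bW}$, both valid under Assumption~\ref{assump:partial}.
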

To contextualize Corollary~\ref{cor:loo.residual}, we first note that the LOO residuals in the classical regime can be expressed as $\tbvarepsilon_{\text{classical}} = [\diag(\bP_{\bX}^\perp)]^{-1} \bP_{\bX}^\perp \by$, provided $\bX$ has full column rank. 
Similarly, \citet[Corollary 3]{shen2024} showed that the LOO residuals in the high-dimensional regime under full regularization obey 

\begin{align}
    \tbvarepsilon_{\text{full}} &= \big[ \diag(\Gram_{\bX}) \big]^{-1}  \Gram_{\bX}  \by,  \label{eq:loo.shortcut.hd}
\end{align}
provided $\bX$ has full row rank. 
Corollary~\ref{cor:loo.residual} demonstrates that the LOO residuals in the high-dimensional partial regularization setting admit a similar closed-form expression, particularly with respect to $\tbvarepsilon_{\text{full}}$, albeit with more complexity. 
This complexity is likely an artifact of the partially regularized OLS interpolator treating $\bW$ and $\bT$ as distinct objects, which then leads to a unique term, $\bH_i$, for the $i$-th LOO residual. 
The gap between the two expressions highlights the added complication of the partial regularization setting compared to the full regularization setting. 
Nevertheless, we hope the element-wise closed-form residual formulas of Corollary~\ref{cor:loo.residual} may serve as a useful tool to formally analyze the partially regularized OLS interpolator's generalization capabilities under particular statistical frameworks. 
Additionally, as we will see in the following section, the result in Corollary~\ref{cor:loo.residual} enables us to construct a variance estimator under homoskedasticity (see Theorem~\ref{thm:var.estimator.pr}).

\section{Statistical Inference} \label{sec:stat_infer}
Up until this point, we have not made any stochastic assumptions. 
In other words, all results presented in Sections~\ref{sec:col_par} and \ref{sec:row_par} are purely algebraic and hold for any DGP.
Now, we will postulate the canonical Gauss-Markov model under homoskedasticity \citep{aitken1936iv}. 

\begin{enumerate}[label=(\texttt{B\arabic*})]
	\item \label{assump:lm} 
	Let $\by = \bX \bbeta + \bvarepsilon$, where $\bX \in \Rb^{n \times p}$ is fixed, $\bbeta \in \Rb^p$ is a deterministic set of model parameters, and $\bvarepsilon \in \Rb^n$ is a random vector with $\bbE[\bvarepsilon] = \bzero$ and $\Cov(\bvarepsilon) = \sigma^2 \bI_n$. 	
\end{enumerate} 
The focus of this section is to recover $\sigma^2$, which represents an important quantity in many problems in statistics and machine learning. 
In prediction tasks, $\sigma^2$ often determines the scale of an estimator's risk under the $\ell_2$-loss. 
Popular model selection statistics, including AIC and BIC, rely on reliable estimates of $\sigma^2$. 
Estimating the signal-to-noise ratio, frequently measured as $\| \bbeta \|_2 / \sigma^2$, is critical for selecting the tuning parameter (e.g., in ridge and Lasso) and can determine the performance limits in certain high-dimensional regression problems. 
Finally, conducting inference in various causal inference settings often requires consistent estimation of $\sigma^2$. 
With this in mind, we proceed to propose three variance estimators for $\sigma^2$.

\subsection{Variance Estimation via LOO Residuals}\label{sec:ve_loo}
The de facto approach within the classical regime of $n > p$ is to estimate $\sigma^2$ via the OLS estimator's in-sample residuals; specifically, $\hsigma^2 = (n-p)^{-1} \| \bP_{\bX}^\perp \by \|_2^2$. 
This method enjoys many properties including unbiasedness and consistency. 
However, such an approach fails to translate to the high-dimensional regime since the OLS estimator interpolates the data, which leads to zero in-sample residuals. 

To overcome this challenge, the recent work of \citet{shen2024} proposed a variance estimator based on the LOO residuals under the \textit{full regularization} setting with the assumption that $\bX$ has full row rank:  
\begin{align}
    \hsigma_{\text{full}}^2 &= \frac{ \left\| \tbvarepsilon_{\text{full}} \right\|_2^2 }{ \big\| \big[ \diag(\Gram_{\bX}) \big]^{-1} \Gram_{\bX} \big\|_{\F}^2 }, \label{eq:var.estimator.hd} 
\end{align}
where $\tbvarepsilon_{\text{full}}$ is defined as in \eqref{eq:loo.shortcut.hd}. 
Under Assumption~\ref{assump:lm}, it follows that
\begin{align}
    \bbE[\hsigma_{\text{full}}^2] = \sigma^2 + \frac{ \big\| \bbE[\tbvarepsilon_{\text{full}}] \big\|_2^2}{ \big\| \big[ \diag(\Gram_{\bX}) \big]^{-1}  \Gram_{\bX} \big\|_{\F}^2 }. \label{eq:bias.hd} 
\end{align} 
Following this LOO strategy, we anchor on Corollary \ref{cor:loo.residual} to construct an analogous variance estimator in the high-dimensional partially regularized setting. 

\begin{theorem} \label{thm:var.estimator.pr} 
Let Assumption~\ref{assump:partial} hold. 
Define 
\begin{equation}
    \hsigmapin^2 = \frac{\| \tbvarepsilon_{\emph{partial}}^2\|_2^2 }{\Sigma_{i=1}^n \big\| \be_i^\top \big[ \diag(\Gram_{\bW}) \big]^{-1} \Gram_{\bW} (\bI_n - \bH_i) \big\|_2^2 }, \label{eq:var.estimator.hd.partial} 
\end{equation}
where the $i$-th entry of $\tbvarepsilon_{\emph{partial}}$ is given by $\tbvarepsilon_i$ as defined in Corollary \ref{cor:loo.residual}. 
If Assumption~\ref{assump:lm} holds, then we have 
\begin{equation}
        \bbE[\hsigmapin^2] = \sigma^2 + \frac{ \| \bbE[\tbvarepsilon_{\emph{partial}}]\|_2^2}{\Sigma_{i=1}^n \big\| \be_i^\top \big[ \diag(\Gram_{\bW}) \big]^{-1}\Gram_{\bW} (\bI_n - \bH_i) \big\|_2^2}. \label{eq:bias.hd.partial} 
    \end{equation} 
\end{theorem}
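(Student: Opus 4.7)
The plan is to exploit the fact that the denominator in the definition of $\hsigmapin^2$ is a deterministic function of $\bW$ and $\bT$ only, so the expectation reduces to computing $\bbE\big[\|\tbvarepsilon_{\text{partial}}\|_2^2\big]$ and dividing by this constant. Concretely, Corollary~\ref{cor:loo.residual} says that under Assumption~\ref{assump:partial}, each LOO residual is linear in $\by$: defining the row vector $\ba_i^\top \coloneqq \be_i^\top [\diag(\Gram_{\bW})]^{-1} \Gram_{\bW}(\bI_n - \bH_i)$, we have $\tbvarepsilon_i = \ba_i^\top \by$. The denominator in \eqref{eq:var.estimator.hd.partial} is then exactly $\sum_{i=1}^n \|\ba_i\|_2^2$, a deterministic quantity once the design is fixed.

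Next I would invoke Assumption~\ref{assump:lm}. Since $\by = \bX\bbeta + \bvarepsilon$ with $\bbE[\bvarepsilon] = \bzero$ and $\Cov(\bvarepsilon) = \sigma^2 \bI_n$, linearity gives $\bbE[\tbvarepsilon_i] = \ba_i^\top \bX\bbeta$ and $\var(\tbvarepsilon_i) = \sigma^2 \|\ba_i\|_2^2$. Applying the elementary bias--variance decomposition $\bbE[\tbvarepsilon_i^2] = (\bbE[\tbvarepsilon_i])^2 + \var(\tbvarepsilon_i)$ and summing over $i \in [n]$ yields
\begin{equation*}
\bbE\big[\|\tbvarepsilon_{\text{partial}}\|_2^2\big] \;=\; \|\bbE[\tbvarepsilon_{\text{partial}}]\|_2^2 \;+\; \sigma^2 \sum_{i=1}^n \|\ba_i\|_2^2.
\end{equation*}

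Finally, I would divide both sides by $\sum_{i=1}^n \|\ba_i\|_2^2$. Because the denominator is deterministic, the expectation passes through cleanly and the identity in \eqref{eq:bias.hd.partial} falls out immediately, mirroring the structure of \eqref{eq:bias.hd} in the fully regularized case. There is essentially no hard step here; the only thing worth checking carefully is that Assumption~\ref{assump:partial} indeed guarantees that every quantity appearing in $\ba_i$ is well-defined (in particular, that $\diag(\Gram_{\bW})$ has strictly positive diagonal entries so its inverse exists, which follows from $\bW$ having full row rank), and that $\bH_i$ is well-defined along the lines established in Corollary~\ref{cor:loo.residual}. Once these regularity points are noted, the argument is a one-line bias--variance computation lifted from the LOO representation.
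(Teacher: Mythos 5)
Your proposal is correct and matches the paper's proof: the paper writes $\tbvarepsilon_i^2 = \by^\top \bM_i \by$ with the rank-one matrix $\bM_i = \ba_i \ba_i^\top$ and applies the quadratic-form identity $\bbE[\by^\top \bM \by] = \bbeta^\top \bX^\top \bM \bX \bbeta + \sigma^2 \tr(\bM)$, which is exactly your scalar bias--variance decomposition since $\tr(\ba_i \ba_i^\top) = \|\ba_i\|_2^2$. Summing over $i$ and dividing by the deterministic denominator is identical in both arguments.
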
 
Theorem~\ref{thm:var.estimator.pr} states that $\hsigma^2_{\text{partial}}$, as with $\hsigma^2_{\text{full}}$, is biased. 

\subsection{Variance Estimation via the In-Sample Residuals of Partial Regularization}\label{sec:ve_partial}
In the classical regime, \citet{ding_fwl} showed that the in-sample residuals induced by partial regression can be used to construct variance estimators for $\sigma^2$. 
Motivated by this idea, we artificially construct nontrivial in-sample residuals by transforming the response and one sub-covariate matrix to a subspace related to the other sub-covariate matrix, according to the formulations implied by $\hbbeta^{[\cW]}_{\cW}$ and $\hbbeta^{[\cW]}_{\cW^c}$ in \eqref{eq:beta.fwl.hd}. 
This yields the following two estimators. 

\paragraph{Variance estimation via $\hbbeta^{[\cW]}_{\cW}$.} 
By \eqref{eq:beta.fwl.hd}, $\hbbeta^{[\cW]}_{\cW}$ is obtained by regressing $\bP_{\bT}^\perp \by$ on $\bP_{\bT}^\perp \bW$. 
Notably, the in-sample residuals induced by this formulation are trivially zero. 
To see this, observe $\bP_{\bT}^\perp \by -  \bP_{\bT}^\perp \bW \hbbeta^{[\cW]}_{\cW} = \bzero$. 
Accordingly, we rewrite $\hbbeta^{[\cW]}_{\cW}$ as $\hbbeta^{[\cW]}_{\cW} = (\bP_{\bT}^\perp \bW)^\dagger \by$,
which can be obtained by regressing $\by$ on $\bP_{\bT}^\perp \bW$. 
We utilize this form since these in-sample residuals, $\by -  \bP_{\bT}^\perp \bW \hbbeta^{[\cW]}_{\cW}$, are nontrivial. 

\paragraph{Variance estimation via $\hbbeta^{[\cW]}_{\cW^c}$.} 
Following the arguments above, we can also construct a variance estimator based on the in-sample residuals induced by $\hbbeta^{[\cW]}_{\cW^c}$, which \eqref{eq:beta.fwl.hd} reveals can be viewed as the OLS solution obtained by regressing $\bW^\dagger \by$ on $\bW^\dagger \bT$. 

\paragraph{Formal results.} We formally present two variance estimators based on the discussions above and analyze their biases below. 
\begin{theorem} \label{thm:var.estimator.partial} 
Let Assumption~\ref{assump:partial} hold. 
Define 
	\begin{align}
		\hsigma^2_{\cW} &= \frac{ \| \by - \bP_{\bT}^\perp \bW \cdot \hbbeta^{[\cW]}_{\cW} \|_2^2}{\rank{(\bT)}}. \label{eq:var.estimator.partial.hd.j} 
	\end{align} 
	If Assumption~\ref{assump:lm} holds, then we have  
	\begin{align}
    		\bbE[\hsigma^2_{\cW}] &= \sigma^2 + \frac{ \| \bP_{\bT}\bbE[\by] \|_2^2}{\rank{(\bT)}}. \label{eq:bias.j.hd} 
    	\end{align} 
	Next, define 
	\begin{align}
		\hsigma^2_{\cW^c} = \frac{ \| \bW^\dagger \by - \bW^\dagger \bT \cdot \hbbeta^{[\cW]}_{\cW^c} \|_2^2}{\tr(\bP_{\bT}^\perp \Gram_{\bW})}. \label{eq:var.estimator.partial.hd.jc} 		
	\end{align}
	If Assumption~\ref{assump:lm} holds, then we have 
	\begin{align}
		\bbE[\hsigma^2_{\cW^c}] &= \sigma^2 + \frac{ \|  \bW^\dagger \bP_{\bT}^\perp \bbE[\by] \|_2^2}{\tr(\bP_{\bT}^\perp \Gram_{\bW})}. \label{eq:bias.jc.hd} 
    	\end{align}  
\end{theorem}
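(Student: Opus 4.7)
Both claims share a common template: each numerator is the squared Euclidean norm of a vector linear in $\by$, so I would express it as a quadratic form $\by^{\top}\bA\by$ with $\bA$ symmetric, and then invoke the identity $\bbE[\by^{\top}\bA\by] = \bbE[\by]^{\top}\bA\bbE[\by] + \sigma^2 \tr(\bA)$, which follows directly from Assumption~\ref{assump:lm}. The remaining work is to identify $\bA$ in each case, simplify $\tr(\bA)$, and simplify the mean-squared term into the form appearing in \eqref{eq:bias.j.hd} or \eqref{eq:bias.jc.hd}.

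For $\hsigma^2_{\cW}$, I would first establish the preliminary identity $\bP_{\bP_{\bT}^{\perp}\bW} = \bP_{\bT}^{\perp}$. This uses Assumption~\ref{assump:partial}: since $\bW$ has full row rank, $\colsp(\bP_{\bT}^{\perp}\bW) = \bP_{\bT}^{\perp}\Rb^n = \colsp(\bT)^{\perp}$, so the two projections agree. Substituting $\hbbeta^{[\cW]}_{\cW} = (\bP_{\bT}^{\perp}\bW)^{\dagger}\by$ into the numerator of \eqref{eq:var.estimator.partial.hd.j} then collapses the residual to $\by - \bP_{\bT}^{\perp}\by = \bP_{\bT}\by$. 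Taking $\bA = \bP_{\bT}$, which is symmetric idempotent with $\tr(\bA) = \rank(\bT)$, and dividing the resulting expectation by $\rank(\bT)$ yields \eqref{eq:bias.j.hd} immediately.

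For $\hsigma^2_{\cW^c}$, substituting $\hbbeta^{[\cW]}_{\cW^c} = (\bW^{\dagger}\bT)^{\dagger}\bW^{\dagger}\by$ from \eqref{eq:beta.fwl.hd} into the numerator of \eqref{eq:var.estimator.partial.hd.jc} gives the residual $\bP_{\bW^{\dagger}\bT}^{\perp}\bW^{\dagger}\by$, so the numerator equals $\by^{\top}\bA\by$ with $\bA = \bW^{\dagger,\top}\bP_{\bW^{\dagger}\bT}^{\perp}\bW^{\dagger}$. Using the identity $\bW^{\dagger,\top}\bW^{\dagger} = \Gram_{\bW}$, which follows from $\bW\bW^{\dagger} = \bI_n$ under Assumption~\ref{assump:partial}, I would rewrite $\bA = \Gram_{\bW} - \Gram_{\bW}\bT(\bT^{\top}\Gram_{\bW}\bT)^{-1}\bT^{\top}\Gram_{\bW}$. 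The two remaining tasks are algebraic: show $\tr(\bA) = \tr(\bP_{\bT}^{\perp}\Gram_{\bW})$ and $\bbE[\by]^{\top}\bA\bbE[\by] = \|\bW^{\dagger}\bP_{\bT}^{\perp}\bbE[\by]\|_2^2$, after which \eqref{eq:bias.jc.hd} follows from the quadratic-form identity.

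The main obstacle is this final algebraic step. The two target identities compare oblique-projection expressions in the $\Gram_{\bW}$-weighted inner product against orthogonal-projection expressions in the Euclidean inner product, and bridging them requires careful manipulation via the cyclicity of trace, the defining relation $\bW\bW^{\dagger} = \bI_n$, and potentially the alternative form for $\hbbeta^{[\cW]}_{\cW}$ recorded in Remark~\ref{rmk:fwl.j.hd.1}. I expect this bookkeeping, rather than the probabilistic content, to consume most of the effort.
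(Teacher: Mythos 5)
Your treatment of $\hsigma^2_{\cW}$ is correct and is essentially the paper's own proof: the residual collapses to $\bP_{\bT}\by$ because $\colsp(\bP_{\bT}^{\perp}\bW)=\colsp(\bT)^{\perp}$ when $\bW$ has full row rank, and the quadratic-form expectation identity with $\bA=\bP_{\bT}$, $\tr(\bP_{\bT})=\rank(\bT)$, finishes it.

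For $\hsigma^2_{\cW^c}$, the step you defer as ``bookkeeping'' is not bookkeeping --- it is where the argument breaks, and you should not expect to close it. Applying the quadratic-form identity to $\bA=\bW^{\dagger,\top}\bP_{\bW^{\dagger}\bT}^{\perp}\bW^{\dagger}$ gives
\[
\bbE\big[\|\bP_{\bW^{\dagger}\bT}^{\perp}\bW^{\dagger}\by\|_2^2\big]=\|\bP_{\bW^{\dagger}\bT}^{\perp}\bW^{\dagger}\bbE[\by]\|_2^2+\sigma^2\,\tr\big(\bP_{\bW^{\dagger}\bT}^{\perp}(\bW^{\top}\bW)^{\dagger}\big),
\]
and this is in fact exactly where the paper's own proof stops. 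Converting this to the form displayed in \eqref{eq:bias.jc.hd} would require $\bP_{\bW^{\dagger}\bT}\bW^{\dagger}=\bW^{\dagger}\bP_{\bT}$, i.e.\ that the $\Gram_{\bW}$-oblique projection $\bT(\bT^{\top}\Gram_{\bW}\bT)^{-1}\bT^{\top}\Gram_{\bW}$ coincide with the Euclidean projection $\bT(\bT^{\top}\bT)^{-1}\bT^{\top}$, which fails unless $\Gram_{\bW}\bT$ and $\bT$ span the same column space. Concretely, take $n=2$, $\bW=[\diag(1,1/\sqrt{2}),\,\bzero]\in\Rb^{2\times 3}$ so that $\Gram_{\bW}=\diag(1,2)$, and $\bT=(1,1)^{\top}$: then $\tr(\bP_{\bW^{\dagger}\bT}(\bW^{\top}\bW)^{\dagger})=5/3$ whereas $\tr(\bP_{\bT}\Gram_{\bW})=3/2$, so $\tr(\bA)=4/3\neq 3/2=\tr(\bP_{\bT}^{\perp}\Gram_{\bW})$, and the mean-square terms differ likewise. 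What your computation (and the paper's) actually establishes is the bias formula with $\bP_{\bW^{\dagger}\bT}^{\perp}\bW^{\dagger}$ in place of $\bW^{\dagger}\bP_{\bT}^{\perp}$ and with normalizer $\tr(\bP_{\bW^{\dagger}\bT}^{\perp}(\bW^{\top}\bW)^{\dagger})$; for the expectation to take the clean $\sigma^2+(\cdot)$ form, the denominator of \eqref{eq:var.estimator.partial.hd.jc} must be that same trace. You should prove the version your derivation yields rather than attempt the two identities you listed, and flag the discrepancy with the stated \eqref{eq:bias.jc.hd}.
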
 

Similar to our LOO-based homoskedastic variance estimator outlined in \eqref{eq:var.estimator.hd.partial}, the two estimators defined in \eqref{eq:var.estimator.partial.hd.j} and \eqref{eq:var.estimator.partial.hd.jc} are also conservative. 
We leave a formal analysis of all three estimators under specific generative models for $\bX = [\bW, \bT]$ as important future work.

\section{Simulation Studies}\label{sec:simulation}

In this section, we evaluate the variance estimators from Section \ref{sec:stat_infer} through four simulations, examining their finite-sample properties from various perspectives and under different generative models.

\subsection{Covariate Models} \label{sec:sim.covariates} 
In our construction of $\bX = [\bW, \bT]$, we set $\bT = \bone$. 
We generate $\bW \in \Rb^{n \times q}$ with $q = p-1$ via the three different processes below with Assumption~\ref{assump:partial} upheld. 
\begin{enumerate}[label=(\alph*)]

\item \emph{Standard normal model.} 
Let the entries of $\bW$ be i.i.d. draws from a standard normal. 

\item \emph{Spiked model} \citep{spiked_model}. 
Let $\bW = \bU \bSigma^{1/2}$, where $\bU \in \Rb^{n \times q}$ is a random matrix with orthonormal rows and $\bSigma = \sigma_x^2 \cdot \left( \bI_{q} + \sum_{\ell=1}^k \lambda_\ell \bv_\ell \bv_\ell^\top \right) \in \Rb^{q \times q}$ with $\sigma_x \in \Rb$, $\lambda_\ell \gg 1$, and $v_\ell$ sampled uniformly at random from the unit sphere of dimension $q-1$. 

\item \emph{Geometric model.} 
Let $\bW = \bU \bSigma^{1/2} \bV^\top$, where $\bU \in \Rb^{n \times q}$ and $\bV \in \Rb^{q \times q}$ are random matrices with orthonormal rows and $\bSigma = \lambda^2 \diag\left( \rho^\ell \right) \in \Rb^{q \times q}$ with $\lambda \in \Rb$, $\rho \in (0,1)$. 

\end{enumerate}

\subsection{Experiment Design}

We will analyze the biases $\hsigma^2 - \sigma^2$ for each variance estimator and each covariate generative model. While Assumption~\ref{assump:lm} assumes that $\bX = [\bW,\bone]$ is fixed, our simulations introduce randomness in $\bW$ to evaluate the average performance of the estimators. Specifically, each bias value is averaged over 100 trials, where in each trial, we make 100 observations $\by$ with a fixed independent draw of $\bW$, resulting in a total of 10,000 simulation repeats.

\begin{remark}[Omitting $\hsigmareg^2$]
    In this experimental setting, the bias of $\hsigmareg^2$ is given by $n^{-1}(\sum_{i=1}^n \bw_i^\top\bbeta_1 + n\beta_0)^2$. Consequently, when $\bW$ is generated using the standard normal model, we have $\text{bias}(\hsigmareg^2) = O(n)$, resulting in a significantly larger bias compared to the other three estimators. Therefore, we omit the results for $\hsigmareg^2$ in the main section and refer readers to the supplementary materials for the full results.
\end{remark}

\subsection{Simulation I: Fixed $p$ and Varying $n$} \label{sec:sim.bias.1} 
%

\paragraph{Data generating process.} \label{sec.sim.1.dgp} 
We fix $p = 100$ and set $\bbeta = (\bbeta_1^\top,\beta_0)^\top$ with $\beta_0=1$ and $\bbeta_1 = p^{-1/2} \bone$.
We vary the sample size $n \in \{20, 40, 60, 80, 99\}$ to ensure that $\bW$ is full row rank.
For each $n$, we generate three covariate matrices $\bW$ as described in Section~\ref{sec:sim.covariates}.
For the spiked model, we choose $\sigma^2_x = 1$, $k = 5$, and sample $\lambda_\ell$ independently from a uniform distribution over $[10, 20]$. For the geometric model, we set $\lambda = 1$ and $\rho = 0.95$.
We construct the response vector $\by = \bX \bbeta + \bvarepsilon = \bW \bbeta_1+\beta_0\mathbf{1}  + \bvarepsilon$, where the entries of $\bvarepsilon$ are sampled from $\Normal(\mathbf{0}, \bI_n)$, i.e., $\sigma^2 = 1$.

\paragraph{Simulation results.} \label{sec:sim.1.results} 

As shown in Figure \ref{fig:bias_sim1}, the finite-sample bias for all three estimators is more pronounced under the standard normal model, while they exhibit near-zero bias in the spiked and geometric models.

\subsection{Simulation II: Fixed Ratio of $n/p$ and Increasing Dimension $p$} \label{sec:sim.bias.2} 
\paragraph{Data generating process.} \label{sec:sim.2.dgp}
Consider a fixed ratio of $n/p = 0.8$. 
For each $p \in \{50,75,100, 125,150\}$, we set $\bbeta = (\bbeta_1^\top,\beta_0)^\top$, where $\beta_0 =1$ and $\bbeta_1 = p^{-1/2} \bone$. 
Then we generate covariate matrix with the same parameters as in Section~\ref{sec.sim.1.dgp}. 
We generate the response $\by = \bW \bbeta_1+\beta_0\mathbf{1}  + \bvarepsilon$ by sampling $\bvarepsilon \sim \Normal(\bzero, \bI_n)$.

\paragraph{Simulation results.} \label{sec:sim.2.results} 

Figure \ref{fig:bias_sim2} reveals a similar observation to Simulation I: the average biases of the three estimators remain near zero across different covariate sizes in the spiked and geometric models, but are higher in the standard normal model.

\subsection{Simulation III: Fixed $(n, p)$ with Increasing Noise Variance $\sigma^2$}  \label{sec:sim.bias.3} 

\paragraph{Data generating process.} 
We maintain a fixed aspect ratio of $n/p = 0.8$ with $p = 100$ and $n = 80$.
As before, we set $\bbeta = (\bbeta_1^\top, \beta_0)^\top$, where $\beta_0 = 1$ and $\bbeta_1 = p^{-1/2} \bone$, and generate the covariate matrix $\bW$ using the same parameters as in Section~\ref{sec.sim.1.dgp}.
We consider increasing noise levels $\sigma \in \{1, 2, 5, 7, 10\}$.
For each $\sigma$, we generate $\by = \bW \bbeta + \beta_0 \mathbf{1}_n + \bvarepsilon$ by sampling $\bvarepsilon \sim \Normal(\bzero, \sigma^2\bI_n)$.

\paragraph{Simulation results.} Figure \ref{fig:bias_sim3} illustrates that the three estimators exhibit near-zero bias across all models, with differences in their standard deviation.

\subsection{Simulation IV: fixed $(n, p)$ with increasing intercept magnitude}  \label{sec:sim.bias.4} 
\paragraph{Data generating process.} 
In this experiment, we aim to investigate how un-regularized columns affect the performance of different variance estimators. 
We maintain a fixed aspect ratio of $n/p = 0.8$ with $p = 100$ and $n = 80$.
We consider $\beta_0 \in \{1, 2, 5, 7, 10\}$ and set $\bbeta = (\bbeta_1^\top,\beta_0)^\top$, where $\bbeta_1 = p^{-1/2} \bone$. We generate the covariate matrix $\bW$ using the same parameters as in Section~\ref{sec.sim.1.dgp}.
For each $\beta_0$, we generate the response vector $\by = \bW \bbeta + \beta_0\mathbf{1} + \bvarepsilon$, where $\bvarepsilon \sim \Normal(\mathbf{0}, \bI_n)$.
%
\paragraph{Simulation results.} 
As shown in Figure~\ref{fig:bias_sim4}, in the standard normal model, unlike the previous experiments where the three estimators behaved similarly, the bias of $\hsigmaf$ increases significantly as the intercept magnitude grows, while $\hsigmaunreg^2$ and $\hsigmap^2$ maintain near-zero bias across different intercept values.
In both the spiked and geometric models, the average biases of all three estimators remain nearly zero, with their standard deviations relatively constant.

\begin{remark} [General takeaways] \label{rmk:simu_takeaway}
    Based on the observations above, $\hsigmaunreg^2$ consistently exhibits low bias and variance, making it the most practical variance estimator. Although $\hsigmapin$ also demonstrates similar properties, it is more computationally demanding due to the complexity of the LOO residual formulas. On the other hand, $\hsigmafin$ can show significant bias in certain scenarios while $\hsigmareg^2$ has a bias of order $n$. 
    Additionally, our results suggest that the estimators perform best when the covariate matrix has approximately low-rank structure (e.g., spiked and geometric models). 
    This finding aligns itself with the conclusions drawn in seminal works a la \citet{bartlett2020benign} that argue the OLS interpolator demonstrates benign overfitting when the covariates are approximately low-rank. 
\end{remark}

\section{Conclusion}\label{sec:conclusion}

This article explored fundamental algebraic and statistical properties of the partially regularized OLS interpolator. We derived high-dimensional partially regularized counterparts of Cochran's formula and the leave-one-out (LOO) formula, both of which are agnostic to statistical modeling assumptions.
Additionally, we extended homoskedastic variance estimation under the Gauss-Markov model to the high-dimensional partial regularization setting, providing a basis for conducting statistical inference, albeit with slight conservatism.
We believe our work enhances the understanding of overparameterized linear regression and complements ongoing research on benign overfitting. 

There are several avenues of future research. 
In view of the work of \citet{cinelli2020making} that reparameterized Cochran's formula in terms of partial $R^2$ values in the classical regime to allow for practical sensitivity analysis, it would be interesting to develop an analogous result in the high-dimensional regime, which inevitably requires a new notion of a high-dimensional $R^2$. 
Moreover, extending the Gauss-Markov theorem to potentially establshing the optimality of the partially regularized OLS interpolator may be of interest. 
Further, as previously mentioned, formal analyses of the ATE estimation problem in Section~\ref{sec:intro.moti} and variance estimators proposed in Section~\ref{sec:stat_infer} under different DGPs could have substantive impacts across a myriad of problems in statistics and machine learning. 

\begin{figure*}[h!]
	\centering  
        \begin{subfigure}{0.29\linewidth}
		\centering 
		\includegraphics[width=\textwidth]
		{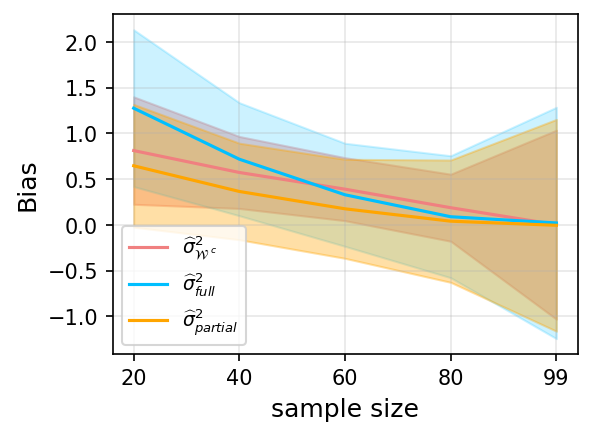}
		\caption{Standard normal model.} 
		\label{fig:standard_normal}
	\end{subfigure} 
        \begin{subfigure}{0.29\linewidth}
		\centering 
		\includegraphics[width=\textwidth]
		{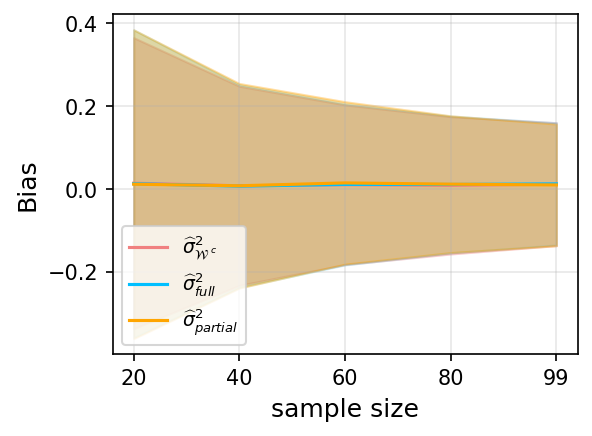}
		\caption{Spiked model.} 
		\label{fig:spiked} 
	\end{subfigure}
        \begin{subfigure}{0.29\linewidth}
		\centering 
		\includegraphics[width=\textwidth]
		{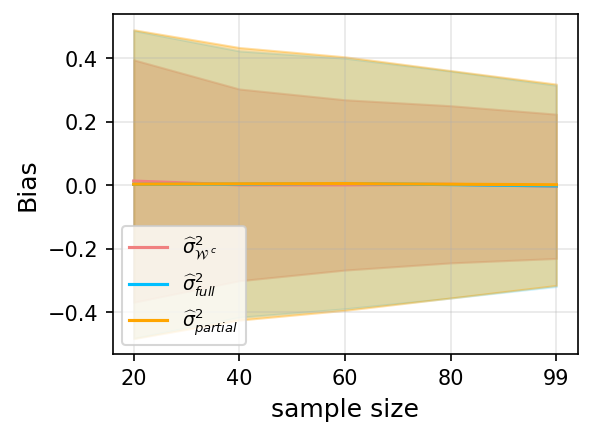}
		\caption{Geometric model.} 
		\label{fig:geometric} 
	\end{subfigure}
	\caption{Simulation results with fixed $p = 100$ and varying $n \in \{20,40,60,80,99\}$. 
	The solid lines represent the average bias over 100 trials, with shading indicating $\pm$ one standard error.}
	\label{fig:bias_sim1} 
\end{figure*}

\begin{figure*}[h!]
	\centering 
	%
        \begin{subfigure}{0.29\linewidth}
		\centering 
		\includegraphics[width=\textwidth]
		{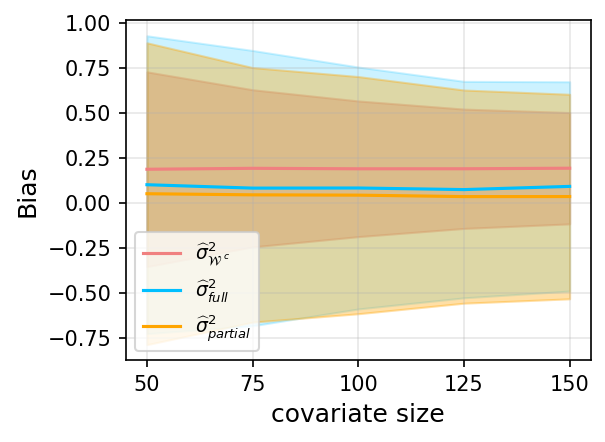}
		\caption{Standard normal model.} 
		\label{fig:standard_normal.2}
	\end{subfigure} 
        \begin{subfigure}{0.29\linewidth}
		\centering 
		\includegraphics[width=\textwidth]
		{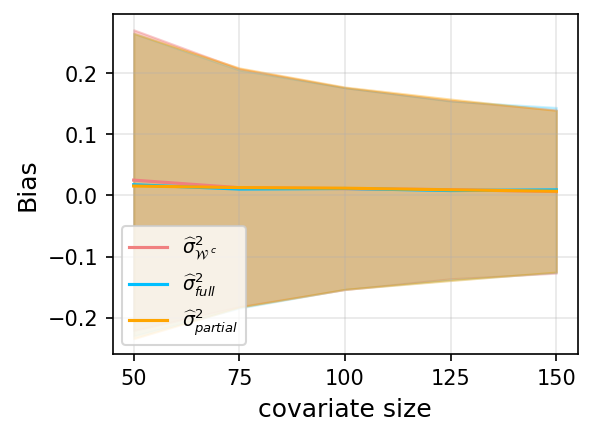}
		\caption{Spiked model.} 
		\label{fig:spiked.2} 
	\end{subfigure}
        \begin{subfigure}{0.29\linewidth}
		\centering 
		\includegraphics[width=\textwidth]
		{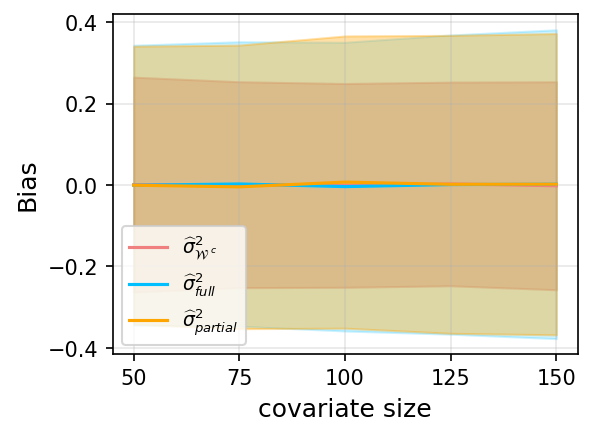}
		\caption{Geometric model.} 
		\label{fig:geometric.2} 
	\end{subfigure}
	\caption{Simulation results with fixed ratio $n/p=0.8$ and varying $p\in\{50,75,100,125,150\}$. 
	The solid lines represent the average bias, with shading indicating $\pm$ one standard error.} 
	\label{fig:bias_sim2} 
\end{figure*}

\begin{figure*}[h!]
	\centering 
        \begin{subfigure}{0.29\linewidth}
		\centering 
		\includegraphics[width=\textwidth]
		{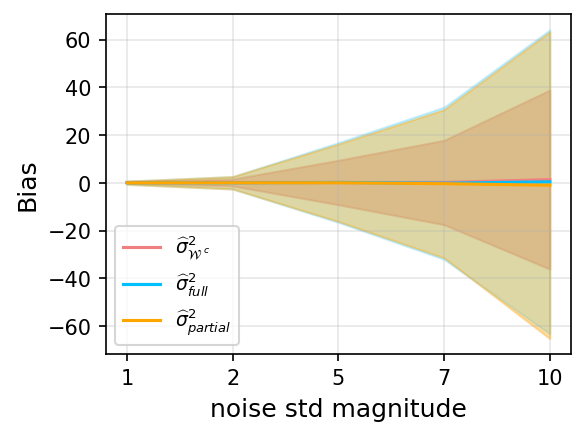}
		\caption{Standard normal model.} 
		\label{fig:standard_normal.3}
	\end{subfigure} 
        \begin{subfigure}{0.29\linewidth}
		\centering 
		\includegraphics[width=\textwidth]
		{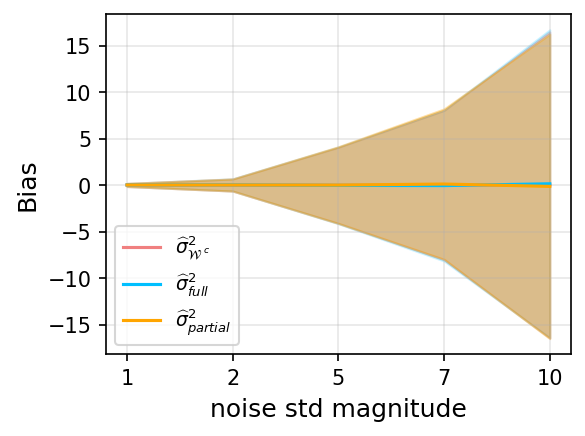}
		\caption{Spiked model.} 
		\label{fig:spiked.3} 
	\end{subfigure}
        \begin{subfigure}{0.29\linewidth}
		\centering 
		\includegraphics[width=\textwidth]
		{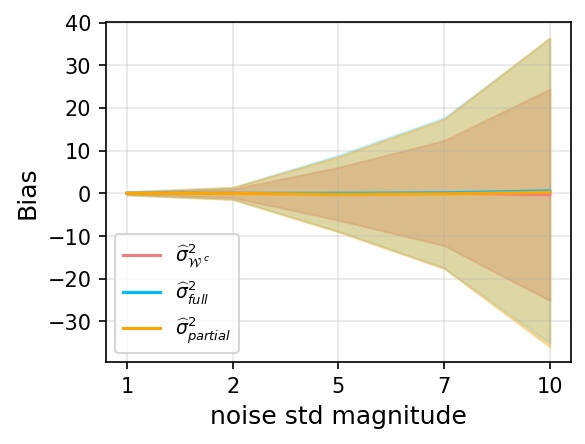}
		\caption{Geometric model.} 
		\label{fig:geometric.3} 
	\end{subfigure}
	\caption{Simulation results with fixed $p=100$, $n=80$, and varying $\sigma \in \{1, 2, 5, 7, 10 \}$. 
	The solid lines represent the average bias, with shading indicating $\pm$ one standard error.}
	\label{fig:bias_sim3} 
\end{figure*}

\begin{figure*}[h!]
	\centering 
        \begin{subfigure}{0.27\linewidth}
		\centering 
		\includegraphics[width=\textwidth]
		{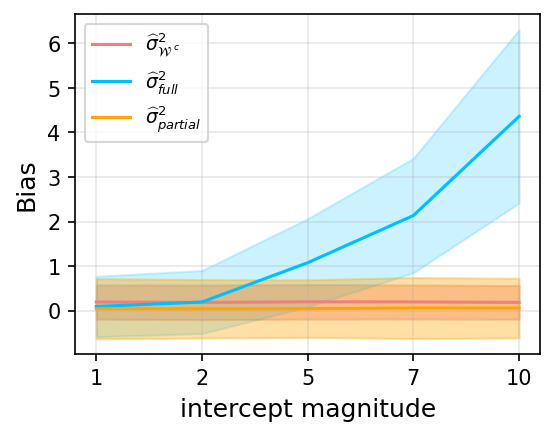}
		\caption{Standard normal model.} 
		\label{fig:standard_normal.4}
	\end{subfigure} 
        \begin{subfigure}{0.29\linewidth}
		\centering 
		\includegraphics[width=\textwidth]
		{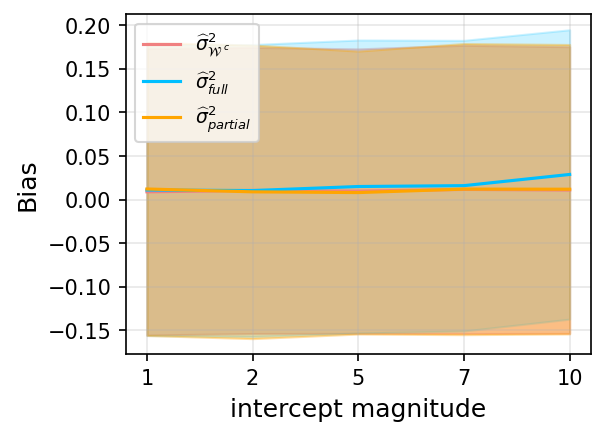}
		\caption{Spiked model.} 
		\label{fig:spiked.4} 
	\end{subfigure}
        \begin{subfigure}{0.29\linewidth}
		\centering 
		\includegraphics[width=\textwidth]
		{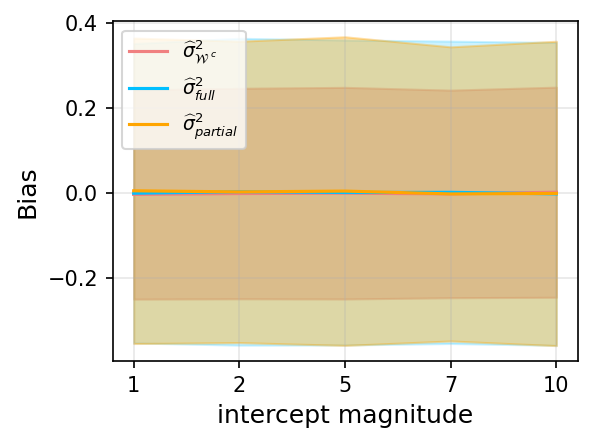}
		\caption{Geometric model.} 
		\label{fig:geometric.4} 
	\end{subfigure}
	%
	\caption{Simulation results with fixed $p=100$, $n=80$, and varying intercept magnitude $\beta_0 \in \{1, 2, 5, 7, 10 \}$. 
	The solid lines represent the average bias, with shading indicating $\pm$ one standard error.}
	\label{fig:bias_sim4} 
\end{figure*}

\bibliographystyle{apalike}
\bibliography{ref.bib}

\appendix
\onecolumn
\newpage
\tmptitle{Supplementary Materials}
\renewcommand{\theequation}{S.\arabic{equation}}
\setcounter{equation}{0}
\renewcommand{\thesection}{S.\arabic{section}}
\setcounter{section}{0}
\renewcommand{\thefigure}{S.\arabic{figure}}
\setcounter{figure}{0}
Appendix \ref{apdx:notation} recollects the notations used in the main text. Appendix \ref{apdx:pinv} provides an overview of the properties of the Moore-Penrose pseudoinverse.
Appendix \ref{apdx:intro.moti} provides the details of the simulation experiment from Section \ref{sec:intro.moti}.
Appendices \ref{apdx:col_par}, \ref{apdx:row_par}, and \ref{apdx:stat_infer} contain the proofs of the theorems, propositions, and corollaries deferred from Sections \ref{sec:col_par}, \ref{sec:row_par}, and \ref{sec:stat_infer}, respectively.
The code for implementing the simulation is provided in the attached zip file.

\section{Recollecting notation}\label{apdx:notation}

\subsection{General}

For any $n \in \NN$, let $[n] \coloneqq \{ 1, \dots, n \}$.
We denote $\langle \bx, \by \rangle = \sum_{i=1}^n x_i y_i$ for $\bx, \by \in \RR^n$, and $\| \bv \|_2 \coloneqq \langle \bv, \bv \rangle^{1/2}$. 
We reserve $\be_i$ to signify the $i$-th standard basis vector of $\RR^n$ for each $i \in [n]$. 
For any vector subspace $\cV \subseteq \RR^n$, let $\cV^{\perp} \coloneqq \{ \bw \in \RR^n: \langle \bw, \bv \rangle = 0, ~\forall \bv \in \cV \}$ represent the orthogonal complement of $\cV$ in $\RR^n$. 
By default, a vector in $\RR^n$ is assumed to be in its column representation of size $n \times 1$, unless stated otherwise. 
For any $n \in \NN$, let $\bI_n$ denote the $n \times n$ identity matrix.

\subsection{Matrix}

For any matrix $\bM \in \Rb^{n \times p}$, let $\| \bM \|_{\F} \coloneqq \tr(\bM^{\top} \bM)^{1/2}$ denote its Frobenius norm. 
The transpose, inverse (if invertible), and Moore-Penrose pseudoinverse of $\bM$ are denoted by $\bM^{\top}$, $\bM^{-1}$, and $\bM^{\dagger}$, respectively.
The column space and row space of $\bM$ are represented as $\colsp(\bM) \subseteq \RR^n$ and $\rowsp(\bM) \subseteq \RR^p$, respectively.
Let $\bP_{\bM} \coloneqq \bM \bM^{\dagger}$ denote the projection matrix onto the column space of $\bM$. We define $\bP_{\bM}^{\perp} = \bI_n - \bP_{\bM}$ as the projection matrix onto the orthogonal complement of $\colsp(\bM)$.
Moreover, let $\Gram_{\bM} = (\bM \bM^\top)^{\dagger} \in \Rb^{n \times n}$ denote the ``inverse Gram matrix'' for the rows of $\bM$.

For nonempty sets $\cI \subseteq [n]$ and $\cJ \subseteq [p]$, let $\bM_{\cI, \cJ} \in \RR^{|\cI| \times |\cJ|}$ denote the submatrix of $\bM$ containing elements $M_{ij}$ for $(i,j) \in \cI \times \cJ$. In cases where $\cI = [n]$, we use $\bM_{\star, \cJ}$ to represent $\bM_{\cI, \cJ}$; similarly, if $\cJ = [p]$, we denote $\bM_{\cI, \star}$ instead of $\bM_{\cI, \cJ}$. For vectors, $\bv_{\cI}$ refers to the subvector of $\bv$ containing the coordinates indexed by $\cI$. 
In our notation, transpose and pseudoinverse operations take precedence over submatrix operations, e.g., $\bM^{\dagger}_{\cJ, \cI}$ refers to the submatrix of $\bM^{\dagger}$, whereas $(\bM_{\cI, \cJ})^{\dagger}$ denotes the pseudoinverse of the submatrix $\bM_{\cI, \cJ}$.

We use the notation $\diag$ for the map that produce diagonal matrices, i.e., for $\bM \in \Rb^{n \times n}$, let $\diag(\bM) \coloneqq \sum_{i=1}^n M_{ii} \be_i \be_i^\top$.
\section{The Moore-Penrose pseudoinverse} \label{apdx:pinv} 
This section provides a brief technical background of the Moore-Penrose pseudoinverse, beginning with its formal definition below. 
\begin{definition}\label{defn:pseudo}
    For $\bM \in \RR^{m \times n}$, the pseudoinverse of $\bM$, denoted by $\bM^{\dagger} \in \RR^{n \times m}$, is defined as a matrix that satisfies the following four conditions, known as the Moore-Penrose criteria:
    \begin{enumerate}
        \item 
        $\bM \bM^{\dagger} \bM = \bM$; 
        \item 
        $\bM^{\dagger} \bM \bM^{\dagger} = \bM^{\dagger}$; 
        \item
        $(\bM \bM^{\dagger})^{\top} = \bM \bM^{\dagger}$;
        \item 
        $(\bM^{\dagger} \bM)^{\top} = \bM^{\dagger} \bM$.
    \end{enumerate}
\end{definition} 

For any matrix $\bM$, the pseudoinverse $\bM^{\dagger}$ exists and is unique \citep{golub2013matrix}; that is, there is precisely one matrix $\bM^{\dagger}$ that satisfies the four properties outlined in Definition \ref{defn:pseudo}. 
Moreover, it can be computed using the singular value decomposition (SVD): if $\bM = \bU \bS \bV^{\top}$ is the compact SVD of $\bM$, where $\bS$ is a square diagonal matrix of size $r \times r$ with $r = \rank(\bM) \leq \min\{m,n\}$, the pseudoinverse is given by $\bM^{\dagger} = \bV \bS^{-1} \bU^{\top}$.

\paragraph{Basic properties.} 
Here we review some useful properties of the pseudoinverse.
\begin{enumerate}[label=(\alph*)]
    \item 
    If $\bM$ is invertible, its pseudoinverse is its inverse, i.e., $\bM^{\dagger} = \bM^{-1}$.
    \item 
    The pseudoinverse of the pseudoinverse is the original matrix, i.e., $(\bM^{\dagger})^{\dagger} = \bM$.
    \item 
    Pseudoinverse commutes with transposition: $(\bM^{\top})^{\dagger} = (\bM^{\dagger})^{\top}$. Thus, we may use notations such as $\bM^{\dagger, \top}$ or $\bM^{\top, \dagger}$ exchangeably, omitting parentheses for notational brevity.
    \item 
    The pseudoinverse of a scalar multiple of $\bM$ is the reciprocal multiple of $\bM^{\dagger}$: $(\alpha \bM)^{\dagger} = \alpha^{-1} \bM^{\dagger}$ for $\alpha \neq 0$.
    \item
    The computation of the pseudoinverse can be expressed in terms of the pseudoinverses of the symmetric matrices $\bM^\top \bM$ and $\bM \bM^\top$: 
    \begin{align}
         \bM^\dagger = (\bM^\top \bM)^\dagger \bM^\top=\bM^\top (\bM \bM^\top)^\dagger. \label{eq:pseudo_compute}
    \end{align}
    \item 
    If a column-wise partitioned block matrix $\bM = \begin{bmatrix} \bA & \bB\end{bmatrix}$ is of full column rank (has linearly independent columns), then 
    \begin{align}
        \bM^{\dagger}
            &=\begin{bmatrix} \bA & \bB \end{bmatrix}^{\dagger}
            = \begin{bmatrix}   (\bP_{\bB}^{\perp}\bA)^{\dagger} \\ (\bP_{\bA}^{\perp}\bB)^{\dagger} \end{bmatrix},
                \nonumber\\
        \bM^{\top, \dagger}
            &= \begin{bmatrix} \bA^{\top} \\ \bB^{\top} \end{bmatrix}^{\dagger}
            = \begin{bmatrix}   (\bA^{\top}\bP_{\bB}^{\perp})^{\dagger} & (\bB^{\top}\bP_{\bA}^{\perp})^{\dagger} \end{bmatrix}.
                \label{eqn:block_pseudo}
    \end{align}
\end{enumerate}

Recall the property $(\bA \bB )^{-1} = \bB^{-1} \bA^{-1}$, which holds for invertible matrices $\bA$ and $\bB$. A similar correspondence exists for the pseudoinverse under certain conditions, although it is not universally applicable. Here, we present several equivalent conditions for this relationship, outlined in \cite{greville1966note}, as a lemma.

\begin{lemma}
    Let $\bA, \bB$ be real matrices. The following are equivalent:
    \begin{enumerate}[label=(\roman*)]
        \item 
        $(\bA \bB)^{\dagger} = \bB^{\dagger} \bA^{\dagger}$.
        \item 
        $\bA^{\dagger}\bA\bB\bB^{\top}\bA^{\top} = \bB\bB^{\top}\bA^{\top}$ and $\bB\bB^{\dagger}\bA^{\top}\bA\bB = \bA^{\top}\bA\bB$.
        \item
        $\bA^{\dagger}\bA\bB\bB^{\top}$ and $\bA^{\top}\bA\bB\bB^{\dagger}$ are both symmetric.
        \item 
        $\bA^{\dagger}\bA\bB\bB^{\top}\bA^{\top}\bA\bB\bB^{\dagger} = \bB\bB^{\top}\bA^{\top}\bA$.
        \item 
        $\bA^{\dagger}\bA\bB = \bB(\bA\bB)^{\dagger}\bA\bB$ and $\bB\bB^{\dagger}\bA^{\top} = \bA^{\top} \bA\bB (\bA\bB)^{\dagger}$.
    \end{enumerate}
\end{lemma}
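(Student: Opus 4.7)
The plan is to verify mutual equivalence of (i)--(v) through a cycle of implications that uses only the four Moore-Penrose axioms of Definition~\ref{defn:pseudo} together with the basic fact that $\bA\bA^\dagger$, $\bA^\dagger\bA$, $\bB\bB^\dagger$, $\bB^\dagger\bB$ are symmetric idempotents (the orthogonal projectors onto the column and row spaces of $\bA$ and $\bB$). A convenient skeleton is (i) $\Leftrightarrow$ (iii) $\Leftrightarrow$ (ii), with (iv) and (v) inserted via the side implications (iii) $\Leftrightarrow$ (iv) and (i) $\Leftrightarrow$ (v).

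The implication (i) $\Rightarrow$ (iii) would start from the observation that, under the reverse-order hypothesis, Moore-Penrose axioms (3) and (4) applied to $\bA\bB$ yield the symmetry of $\bA\bB\bB^\dagger\bA^\dagger$ and of $\bB^\dagger\bA^\dagger\bA\bB$, respectively. Rewriting $\bA^\dagger = \bA^\top(\bA\bA^\top)^\dagger$ and $\bB^\dagger = (\bB^\top\bB)^\dagger\bB^\top$ using \eqref{eq:pseudo_compute}, then pre- or post-multiplying by suitable projectors, I expect to extract the two symmetry statements in (iii). The equivalence (iii) $\Leftrightarrow$ (ii) is then a transposition exercise: symmetry of $\bA^\dagger\bA\bB\bB^\top$ is equivalent to the commutation $\bA^\dagger\bA\bB\bB^\top = \bB\bB^\top\bA^\dagger\bA$, which upon right-multiplication by $\bA^\top$ and use of $\bA^\dagger\bA\cdot\bA^\top = \bA^\top$ produces the first identity in (ii); the reverse direction applies $\bA^\dagger$ on the left and invokes idempotence of $\bA^\dagger\bA$. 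The companion pair is treated symmetrically with the roles of $\bA$ and $\bB$ swapped.

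The most delicate step is (iii) (equivalently, (ii)) $\Rightarrow$ (i). Setting $\bX := \bB^\dagger\bA^\dagger$, I would verify directly that $\bX$ satisfies the four Moore-Penrose axioms for $\bA\bB$. Axioms (P3) and (P4)---symmetry of $\bA\bB\bX$ and $\bX\bA\bB$---follow from (iii) by interleaving $\bB\bB^\top$ and $\bA^\dagger\bA$ through the factorizations of $\bA^\dagger$ and $\bB^\dagger$ in \eqref{eq:pseudo_compute}. For (P1), starting from $\bA\bB\bX\bA\bB = \bA\bB\bB^\dagger\bA^\dagger\bA\bB$, I would use the identities in (ii) to move the projector $\bA^\dagger\bA$ across the $\bB\bB^\top$ block, then collapse via $\bA\bA^\dagger\bA = \bA$ and $\bB\bB^\dagger\bB = \bB$ back to $\bA\bB$; axiom (P2) is handled analogously.

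Finally, (iii) $\Leftrightarrow$ (iv) is obtained by multiplying the two symmetric matrices in (iii) through the bridging factor $\bA^\top\bA$, which fuses them into the single compact identity in (iv); conversely, one recovers both symmetries by pre- and post-multiplying (iv) by appropriate projectors and invoking the identities $\bA^\dagger\bA\bA^\top = \bA^\top$ and $\bB\bB^\top\bB^{\dagger,\top} = \bB$. The equivalence (i) $\Leftrightarrow$ (v) follows by substituting $(\bA\bB)^\dagger = \bB^\dagger\bA^\dagger$ into the right-hand sides of (v), which reduces each identity to one of the absorption statements in (ii). The main obstacle I anticipate is the step (iii) $\Rightarrow$ (i), particularly the verification of (P1) and (P2): the algebra must carefully orchestrate the four projectors without improperly ``cancelling'' $\bA^\dagger$ against $\bA$ or $\bB^\dagger$ against $\bB$. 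Since the lemma is a classical result, a fully detailed execution can be found in \citet{greville1966note}.
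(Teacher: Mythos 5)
You should first know how the paper itself treats this statement: it does not prove it at all. The lemma appears in Appendix S.2 as imported background on the pseudoinverse, stated verbatim with a citation to \citet{greville1966note} and no argument supplied. So there is no in-paper proof to compare against, and your closing deferral to Greville is, in effect, exactly what the authors do. Judged as a sketch, your main skeleton (i) $\Leftrightarrow$ (iii) $\Leftrightarrow$ (ii) is the standard (Greville) route, and the steps you outline there check out: symmetry of $\bA^{\dagger}\bA\bB\bB^{\top}$ is equivalent to commutation with the projector $\bA^{\dagger}\bA$, and right-multiplying by $\bA^{\top}$ together with the absorption identity $\bA^{\dagger}\bA\bA^{\top} = \bA^{\top}$ yields the first identity in (ii), with the companion pair handled symmetrically; verifying the four Penrose axioms for $\bB^{\dagger}\bA^{\dagger}$ under these commutation relations is then a routine (if fiddly) computation, as you correctly anticipate.

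The genuine gap is at (v). Substituting $(\bA\bB)^{\dagger} = \bB^{\dagger}\bA^{\dagger}$ into the right-hand sides of (v) proves only (i) $\Rightarrow$ (v) --- and even that is not quite a reduction to (ii) as you claim, since (ii) involves $\bA^{\top}\bA$ while the substituted identities involve $\bA^{\dagger}\bA$; one must upgrade commutation of $\bB\bB^{\dagger}$ with $\bA^{\top}\bA$ to commutation with $\bA^{\dagger}\bA$ (true because $\bA^{\dagger}\bA = (\bA^{\top}\bA)^{\dagger}(\bA^{\top}\bA)$ and commutation with a symmetric matrix passes to its pseudoinverse). More seriously, the converse (v) $\Rightarrow$ (i) cannot be obtained by substitution at all, because the hypotheses in (v) involve $(\bA\bB)^{\dagger}$ rather than $\bB^{\dagger}\bA^{\dagger}$: writing $\bC = \bA\bB$, one must first extract auxiliary identities from (v), e.g.\ $\bB\bC^{\dagger} = \bA^{\dagger}\bC\bC^{\dagger}$ (from right-multiplying the first identity by $\bC^{\dagger}$) and $\bC\bC^{\dagger}\bA = \bA\bB\bB^{\dagger}$ (the transpose of the second), and then verify the four Penrose axioms for $\bB^{\dagger}\bA^{\dagger}$ from scratch --- a nontrivial derivation your plan does not contain and that does not follow from the machinery you set up for (i)--(iii). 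A smaller instance of the same issue occurs at (iv) $\Rightarrow$ (iii), which you assert via unspecified ``appropriate projectors'' without exhibiting them. If your write-up is meant to rest on the citation, as the paper's does, it is acceptable as is; if it is meant to stand alone, these two reverse directions must be written out.
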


\section{DGP for motivation example in Section \ref{sec:intro.moti}}\label{apdx:intro.moti}

In Section \ref{sec:intro.moti}, we compared the performance of the fully regularized and partially regularized OLS interpolators under a classical causal inference framework, extended to the high-dimensional setting.
Recall that we regress $\by$ on $[\bW, \bD, \bone]$, with the goal of estimating the average treatment effect (ATE) through the coefficient associated with $\bD$.
Specifically, we construct the response vector as $\by = \bW \balpha + \tau \bD + \alpha_0 \bone + \bvarepsilon$.
In the simulation, we fix $n = 80$ and $p = 100$, $\balpha = p^{-\frac{1}{2}}\bone$, $\alpha_0=1$, and set $\tau = [0, \pm 1, \pm 2, \pm 4, \pm 6, \pm 8]$. 
For each $\tau$ value, we generate $\bW$ from a spiked covariance model as described in Section \ref{sec:sim.covariates}, with $k = 5$, $\sigma_x^2 = 1$, and $\lambda_l$ independently sampled from a uniform distribution over $[10,20]$. The treatment variable $\bD$ is generated as $\bD \sim \text{Bernoulli}(0.5)^n$, and the noise term $\bvarepsilon \sim \Normal(\bzero, \bI)$.
To evaluate the performance of the estimators, we compute the average bias over 100 trials, with each trial consisting of 100 different observations of $\by$, using a fixed independent draw of the covariate matrix $\bW$ and treatment assignment $\bD$. This results in a total of 10,000 repetitions for each bias estimate.

\section{Deferred proof from section \ref{sec:col_par}}\label{apdx:col_par}

Before formally proving Theorem \ref{thm:cochran's}, we first make the following preparations:

\textbf{Preparation 1.} We rewrite $\hbbeta^{[\cW]}_{\cW}$ \eqref{eq:beta.fwl.hd} in an alternative form as:

\begin{align}
\hbbeta^{[\cW]}_{\cW} &=
\bP_{\bW^\top} \bP_{\bW^\dagger \bT}^\perp \bW^\dagger \by \nonumber\\
&=
\bW^{\top} \bW^{\top, \dagger}(\bI_q-\bP_{\bW^\dagger \bT}) \bW^\dagger \by
\quad (\because \text{def. of projection matrix})
\nonumber\\
&=
\bW^{\top} \bW^{\top,\dagger} \bW^{\dagger} \by - \bW^{\top} \bW^{\top,\dagger} \bP_{\bW^{\dagger} \bT} \bW^{\dagger} \by 
\nonumber\\
&=
\bW^{\top} \bG_{\bW} \by - \bW^{\top} \bW^{\top, \dagger}(\bW^\dagger \bT)(\bW^\dagger \bT)^{\dagger} \bW^{\dagger} \by 
\quad (\because \Gram_{\bW} = (\bW \bW^\top)^\dagger)
\nonumber\\
&=
\bW^{\top} \bG_{\bW} \by - \bW^{\top} \Gram_{\bW} \bT \hbbeta^{[\cW]}_{\cW^c}
\quad (\because \hbbeta^{[\cW]}_{\cW^c}=(\bW^\dagger \bT)^{\dagger} \bW^{\dagger} \by \text{ by Remark \ref{rmk:fwl.j.hd.1}})
\nonumber\\
&=
\bW^{\top} \bG_{\bW} (\by - \bT \hbbeta^{[\cW]}_{\cW^c}) \label{eq:fwl.j.hd.alt}
\end{align}

\textbf{Preparation 2.} Also, we can rewrite $\hbbeta^{[\cW]}_{\cW^c}$ \eqref{eq:beta.fwl.hd}  as: $
\hbbeta^{[\cW]}_{\cW^c} = \left( \bT^\top \Gram_{\bW} \bT \right)^\dagger \bT^\top \Gram_{\bW} \by
$. To see this, we have:

\begin{align}
    \hbbeta^{[\cW]}_{\cW^c} &= (\bW^\dagger \bT)^\dagger \bW^\dagger \by \nonumber\\
    &= [(\bW^\dagger \bT)^\top \bW^\dagger \bT]^\dagger (\bW^\dagger \bT)^\top \bW^\top \Gram_{\bW} \by 
    \quad \because \eqref{eq:pseudo_compute}
    \nonumber\\
    &= [\bT^\top \Gram_{\bW} \bT]^\dagger \bT^\top \bW^{\dagger,\top} \bW^\top \Gram_{\bW} \by \nonumber\\
    &= [\bT^\top \Gram_{\bW} \bT]^\dagger \bT^\top \Gram_{\bW} \by \quad (\because \text{rank}(\bW)=n, \bW\bW^\dagger=\bI_n)\label{eq:fwl.jc.hd.gls}
\end{align}

\textbf{Preparation 3.} Recall $\bW=[\bZ, \bU]$, with $\bZ \in \Rb^{n \times l}$ full row rank and $\bU \in \Rb^{n \times r}$ full column rank. By the Sherman–Morrison–Woodbury formula \citep{max1950inverting}, we have:
\[
\Gram_{\bW} = \Gram_{\bZ} - \Gram_{\bZ} \bU (\bI + \bU^\top \Gram_{\bZ} \bU)^{-1} \bU^\top \Gram_{\bZ}.
\]
We denote $\bM = \bI + \bU^\top \Gram_{\bZ} \bU$ ($\bM$ is invertible since $\bU$ is full column rank), $\bA = \Gram_{\bZ} \bU(\bI+\bU^\top \Gram_{\bZ} \bU)^{-1} \bU^\top \Gram_{\bZ} = \Gram_{\bZ} \bU \bM^{-1} \bU^\top \Gram_{\bZ}$. We note the following useful equation:
\begin{equation}
    \Gram_{\bW} \bU = (\Gram_{\bZ} - \bA)\bU = \Gram_{\bZ} \bU \bM^{-1} \label{eq:thm1_pf_useful}
\end{equation}
To see this, note that $\bM = \bI + \bU^\top \Gram_{\bZ} \bU$, and since $\bM$ is invertible, we have $\bI = \bM^{-1} + \bM^{-1} \bU^\top \Gram_{\bZ} \bU$. Therefore, $
\Gram_{\bZ} \bU \bM^{-1} = \Gram_{\bZ} \bU (\bI - \bM^{-1} \bU^\top \Gram_{\bZ} \bU) = (\Gram_{\bZ} - \bA) \bU.
$

With the above preparations complete, we are now ready to prove Theorem \ref{thm:cochran's}.

\begin{proof}[Proof of theorem \ref{thm:cochran's}]

To prove \eqref{eq:cochran_image}, note that under Assumption \ref{assump:partial_3}, 

$$\min_{\btheta_1 \in \Rb^p} \| \by - (\bZ \balpha + \bU \bgamma + \bT \btau) \|_2^2 = 0,$$

$$\min_{\btheta_2 \in \Rb^{l+m}}\| \by - (\bZ \balpha + \bT \btau)\|_2^2 = 0,$$

$$\min_{\btheta_3 \in \Rb^{(l+m)\times r}} \| \bU - (\bZ \bDelta + \bT \bdelta) \|_F^2=0,$$
where $\btheta_1 = \begin{pmatrix}
    \balpha \\
    \bgamma \\
    \btau
\end{pmatrix} \in \Rb^p, 
\btheta_2 = \begin{pmatrix}
    \balpha \\
    \btau
\end{pmatrix} \in \Rb^{l+m},
\btheta_3 = \begin{pmatrix}
    \bDelta \\
    \bdelta
\end{pmatrix} \in \Rb^{(l+m)\times r}$. Therefore, $\by = \bZ \hbalpha + \bU \hbgamma + \bT \hbtau$ for $\forall \hbtheta_1 \in \cS_1$, $\by = \bZ \tbalpha + \bT \tbtau$ for $\forall \tbtheta_2 \in \cS_2$, and $\bU = \bZ \hbDelta + \bT \hbdelta$ for $\forall \hbtheta \in \cS_3$. Therefore,

\begin{align}
    \by &= \by 
    \nonumber\\
    \bZ \hbalpha + \bU \hbgamma + \bT \hbtau &= \bZ \tbalpha + \bT \tbtau 
    \nonumber\\
    \bZ (\hbalpha+ \hbDelta \hbgamma) + \bT (\hbtau + \hbdelta\hbgamma) &= \bZ \tbalpha + \bT \tbtau \quad (\because \bU = \bZ \hbDelta + \bT \hbdelta)
\end{align}
Thus we get \eqref{eq:cochran_image}.

Then we prove $\tbtau - \hbtau = \hbdelta \hbgamma$. Suppose $(\hbalpha, \hbgamma, \hbtau)$, $(\tbalpha, \tbtau)$, and $(\hbDelta, \hbdelta)$ are each the partially regularized OLS estimators as defined in \eqref{eq:partial_in_set}.
According to (\ref{eq:fwl.j.hd.alt}) and (\ref{eq:fwl.jc.hd.gls}), we have the following expressions for the coefficient estimators:

$\hbtau = (\bT^\top \Gram_{\bW} \bT)^\dagger(\bT^\top \Gram_{\mathbf{W}} \mathbf{y})$,
$\tbtau = (\bT^\top \Gram_{\bZ} \bT)^\dagger(\bT^\top \Gram_{\bZ} \mathbf{y})$,
$\hbdelta = (\bT^\top \Gram_{\bZ} \bT)^\dagger(\bT^\top \Gram_{\bZ} \bU)$,
$\hbgamma = \bU^{\top} \Gram_{\bW} (\by - \bT \hbtau)$. 
Note that $\bT^\top \Gram_{\bW} \bT$ is full rank, so we can multiply $\bT^\top \Gram_{\bW} \bT$ on both sides of the equation $\tbtau - \hbtau = \hbdelta \hat{\gamma}$. This leads to the equivalent task of proving the following:
\[
\bT^\top \Gram_{\bZ} \by - (\bT^\top \Gram_{\bZ} \bT)(\bT^\top \Gram_{\bW} \bT)^\dagger \bT^\top \Gram_{\bW} \by
=
\bT^\top \Gram_{\bZ} \bU \left[ \bU^\top \Gram_{\bW} \by - \bU^\top \Gram_{\bW} \bT (\bT^\top \Gram_{\bW} \bT)^\dagger \bT^\top \Gram_{\bW} \by \right].
\]

For clarity, we denote $\bQ = (\bT^\top \Gram_{\bW} \bT)^\dagger$. Then we can express the left-hand side as:

$
\text{LHS} = \bT^\top \Gram_{\bZ} \by - \bT^\top \Gram_{\bZ} \bT  \bQ  \bT^\top \Gram_{\bW} \by = \big[\bT^\top \Gram_{\bZ} - \bT^\top (\Gram_{\bW} + \bA) \bT  \bQ  \bT^\top \Gram_{\bW}\big]\by.
$

For the right-hand side, we substitute \eqref{eq:thm1_pf_useful} and obtain:
\begin{align*}
\text{RHS} &= 
\bT^\top \Gram_{\bZ} \bU \bM^{-1} \bU^\top \Gram_{\bZ} \by 
- 
\bT^\top \Gram_{\bZ} \bU \bM^{-1} \bU^\top \Gram_{\bZ} \bT  \bQ  \bT^\top \Gram_{\bW} \by \\
&=
\big[\bT^\top \bA 
- 
\bT^\top \bA \bT  \bQ  \bT^\top \Gram_{\bW} \big]\by.
\end{align*}

Then we have LHS = RHS by noting that
\begin{align*}
    & \bT^\top \Gram_{\bZ} - \bT^\top (\Gram_{\bW} + \bA) \bT  \bQ  \bT^\top \Gram_{\bW} \\
    =&\bT^\top \Gram_{\bZ} - \bT^\top \Gram_{\bW} - \bT^\top \bA \bT  \bQ  \bT^\top \Gram_{\bW} \quad (\because \bT^\top \Gram_{\bW} \bT \bQ = \bI_m)\\
    =&\bT^\top \bA - \bT^\top \bA \bT  \bQ  \bT^\top \Gram_{\bW}
\end{align*}

After completing the proof that $\tbtau - \hbtau = \hbdelta \hbgamma$, and having established \eqref{eq:cochran_image}, we have
$
\bZ \hbDelta \hbgamma = \bZ (\tbalpha - \hbalpha).
$
Since $\bZ$ is full row rank, it follows that $\tbalpha - \hbalpha = \hbDelta \hbgamma$. This concludes the proof of Theorem \ref{thm:cochran's}.

\end{proof}
\section{Deferred proof from section \ref{sec:row_par}}\label{apdx:row_par}

\subsection{Proof of Proposition~\ref{prop:loo}}

Before starting the formal proof, we first explore some interesting properties of the matrices involved:

Recall $\Gram_{\bW} = (\bW \bW^\top)^\dagger$, $\bW\in \Rb^{n \times q}$. 
Let 
\begin{align}
    \bP_i = \frac{\bW^\dagger \be_i \be_i^\top \bW^{\dagger, \top}}{\be_i^\top \Gram_{\bW} \be_i},\quad \bQ_i = \frac{\be_i \be_i^\top \Gram_{\bW}}{\be_i^\top \Gram_{\bW} \be_i}. \label{mat:QiPi}
\end{align}
Then, we have the following claims:

\begin{align}
    &\text{1. } \bQ_i, \bP_i \text{ are idempotent.}\\
    &\text{2. }  (\bI_q - \bP_i) \bW^\dagger=\bW^\dagger(\bI_n - \bQ_i).\label{eq:prep.2}\\
    &\text{3. } \Gram_{\bW}(\bI_n - \bQ_i) = (\bI_n - \bQ_i)^\top\Gram_{\bW}(\bI_n - \bQ_i).\label{eq:prep.3}\\
    &\text{4. } (\bW_{\si})^\dagger   \bW_{\si} = (\bI_q - \bP_i) \bW^\dagger \bW; \label{eq:prep.4}
\end{align}

The first three claims can be verified through straightforward matrix operations. To see the fourth point, we firstly state a helpful classical result on the pseudoinverse of a rank-one perturbation of an arbitrary matrix $\bA \in \Rb^{n \times p}$ from \cite{gen_inverse_meyers}. 
 \begin{lemma} [{\cite[Theorem 6]{gen_inverse_meyers}}] \label{lemma:meyer}
     Let $\bA \in \RR^{n \times p}$, $\bc \in \RR^n$, and $\bd \in \RR^p$. 
     If $\bc \in \colsp(\bA)$, $\bd \in \rowsp(\bA)$, and $1 + \bd^{\top} \bA^\dagger \bc = 0$, then
     \begin{align}
     	(\bA + \bc \bd^{\top})^\dagger = \bA^\dagger - \bk \bk^\dagger \bA^\dagger - \bA^\dagger \bh^{\dagger, \top} \bh^{\top} 
     	+ (\bk^\dagger \bA^\dagger \bh^{\dagger,\top}) \bk \bh^{\top}, \label{eq:meyer.1} 
     \end{align} 
     where $\bk = \bA^\dagger \bc \in \RR^{p}$ and $\bh = \bA^{\dagger, \top} \bd \in \RR^{n}$. 
 \end{lemma}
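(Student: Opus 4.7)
The plan is to verify the four Moore--Penrose axioms directly for the proposed formula. Let $\bB = \bA + \bc\bd^\top$ and let $\bX$ denote the right-hand side of \eqref{eq:meyer.1}; the goal is to show $\bX = \bB^\dagger$ by checking $\bB\bX\bB = \bB$, $\bX\bB\bX = \bX$, and the symmetry of $\bB\bX$ and $\bX\bB$. Before any computation I would unpack the hypotheses into their most useful form: $\bc \in \colsp(\bA)$ gives $\bA\bk = \bc$; $\bd \in \rowsp(\bA)$ gives $\bh^\top\bA = \bd^\top$; and $1 + \bd^\top\bA^\dagger\bc = 0$ translates to the scalar identity $\bh^\top\bc = \bd^\top\bk = -1$. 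Using the general fact $\colsp(\bM^\dagger) = \colsp(\bM^\top)$, one further checks $\bh \in \colsp(\bA)$ and $\bk \in \rowsp(\bA)$, so $\bA\bA^\dagger\bh = \bh$ and $\bA^\dagger\bA\bk = \bk$. Finally, I would record the rewriting $\bh^{\dagger,\top}\bh^\top = \bh\bh^\dagger$ (symmetric by axiom 3 of Definition \ref{defn:pseudo}) together with the rank-one scalar identities $\bh^\top\bh^{\dagger,\top} = 1$ and $\bk^\dagger\bk = 1$.

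The main computation is the reduction of $\bB\bX$ to closed form. Expanding $(\bA + \bc\bd^\top)\bX$ produces eight terms; two pairs cancel using $\bd^\top\bk = -1$ and $\bh^\top\bh^{\dagger,\top}\bh^\top = \bh^\top$, and the surviving four reduce via $\bA\bk = \bc$ to leave $\bB\bX = \bA\bA^\dagger(\bI - \bh\bh^\dagger)$. Because $\bh \in \colsp(\bA)$, the two orthogonal projections $\bA\bA^\dagger$ and $\bh\bh^\dagger$ commute, so $\bB\bX$ is itself an orthogonal projection, hence symmetric, settling axiom 3. A mirror-image computation gives $\bX\bB = (\bI - \bk\bk^\dagger)\bA^\dagger\bA$, which is an orthogonal projection by the analogous argument (using $\bk \in \rowsp(\bA)$), settling axiom 4.

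With $\bB\bX$ and $\bX\bB$ in these compact projection forms, axioms 1 and 2 are essentially routine. For axiom 1, $\bB\bX\bB = \bA\bA^\dagger(\bI - \bh\bh^\dagger)(\bA + \bc\bd^\top)$; the $\bA$ piece simplifies to $\bA - \bh\bd^\top/(\bh^\top\bh)$ via $\bh^\top\bA = \bd^\top$ and $\bh^\dagger = \bh^\top/(\bh^\top\bh)$, while the $\bc\bd^\top$ piece simplifies to $\bc\bd^\top + \bh\bd^\top/(\bh^\top\bh)$ via $\bh^\top\bc = -1$; the two stray $\bh\bd^\top/(\bh^\top\bh)$ contributions cancel, leaving $\bB$. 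For axiom 2, $\bX\bB\bX = (\bI - \bk\bk^\dagger)\bA^\dagger\bA\,\bX$; the identity $\bA^\dagger\bA\bk = \bk$ together with $\bA^\dagger\bA\bA^\dagger = \bA^\dagger$ yields $\bA^\dagger\bA\bX = \bX$, and then three of the four terms of $\bX$ are annihilated or fixed by $\bI - \bk\bk^\dagger$ in obvious ways, leaving the single surviving term $\bk\bk^\dagger\bA^\dagger\bh^{\dagger,\top}\bh^\top = (\bk^\dagger\bA^\dagger\bh^{\dagger,\top})\bk\bh^\top$, which is exactly the fourth term of $\bX$ and restores it.

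The only real obstacle is bookkeeping: each axiom produces a sum of roughly a dozen terms before cancellation, and one must simultaneously juggle the scalar identities ($\bh^\top\bc = -1$, $\bh^\dagger\bh = 1$, $\bk^\dagger\bk = 1$) with the projection identities ($\bA\bA^\dagger\bh = \bh$, $\bA^\dagger\bA\bk = \bk$, $\bh^{\dagger,\top}\bh^\top = \bh\bh^\dagger$). The conceptual payoff, however, is that the hypotheses force $\bh \in \colsp(\bA)$ and $\bk \in \rowsp(\bA)$, which is exactly what makes $\bB\bX$ and $\bX\bB$ bona fide orthogonal projections and thereby collapses the entire verification to projection arithmetic rather than raw expansion.
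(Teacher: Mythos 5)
Your proposal is correct, but note that the paper itself offers no proof of this lemma: it is imported verbatim as Theorem 6 of \citet{gen_inverse_meyers}, so your argument should be judged as a self-contained substitute rather than against an in-paper derivation. As such, it checks out. Writing $\bX$ for the right-hand side of \eqref{eq:meyer.1} and using $\bh^{\dagger,\top}\bh^{\top} = \bh\bh^{\dagger}$, the expansion of $(\bA + \bc\bd^{\top})\bX$ does collapse---via $\bA\bk = \bc$, $\bd^{\top}\bA^{\dagger} = \bh^{\top}$, $\bd^{\top}\bk = \bh^{\top}\bc = -1$, and $\bh^{\top}\bh\bh^{\dagger} = \bh^{\top}$---to $\bB\bX = \bA\bA^{\dagger} - \bh\bh^{\dagger}$, which equals your $\bA\bA^{\dagger}(\bI - \bh\bh^{\dagger})$ since $\bA\bA^{\dagger}\bh = \bh$; the mirror computation gives $\bX\bB = \bA^{\dagger}\bA - \bk\bk^{\dagger}$. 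Both are differences of commuting orthogonal projections (commutation following precisely from $\bh \in \colsp(\bA)$ and $\bk \in \rowsp(\bA)$, as you say), so axioms 3 and 4 hold, and I confirmed your axiom-1 cancellation of the two $\bh\bd^{\top}/(\bh^{\top}\bh)$ terms and your axiom-2 bookkeeping, which amounts to $\bk\bk^{\dagger}\bX = \bzero$. Two points deserve to be made explicit in a full write-up: first, the verification strategy is licensed by the \emph{uniqueness} of the Moore--Penrose inverse (stated after Definition \ref{defn:pseudo}), which you use tacitly when concluding $\bX = \bB^{\dagger}$ from the four axioms; second, the hypothesis $1 + \bd^{\top}\bA^{\dagger}\bc = 0$ is what forces $\bh \neq \bzero$ and $\bk \neq \bzero$ (since $\bh^{\top}\bc = \bd^{\top}\bk = -1$), legitimizing $\bh^{\dagger} = \bh^{\top}/(\bh^{\top}\bh)$ and the scalar identities $\bk^{\dagger}\bk = \bh^{\dagger}\bh = 1$. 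The trade-off relative to Meyer's original treatment is worth knowing: his derivation is constructive, obtaining this formula as one case in a taxonomy of rank-one updates classified by whether $\bc \in \colsp(\bA)$, $\bd \in \rowsp(\bA)$, and whether $1 + \bd^{\top}\bA^{\dagger}\bc$ vanishes; your route is shorter and entirely elementary, but non-constructive---it requires the candidate formula in hand and gives no guidance on how to discover it or on the sibling cases.
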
 

Observe that
\begin{equation}\label{eqn:loo.products}
    (\bX_{\si})^{\top} \bX_{\si} = \bX^{\top} \bX - \bx_i \bx_i^{\top}
    \qquad\text{and}\qquad
    (\bX_{\si})^{\top} \by_{\si} = \bX^{\top} \by - y_i \bx_i = \bX^{\top} \big( \bI_n - \be_i \be_i^{\top} \big) \by,
\end{equation}

We apply Lemma \ref{lemma:meyer} with $\bA = \bX^{\top} \bX$, $\bc = \bx_i = \bX^{\top} \be_i$, and $\bd = -\bx_i = - \bX^{\top} \be_i$. Then, 
\[
    \bk = \bA^{\dagger} \bc = \big( \bX^{\top} \bX \big)^{\dagger} \bX^{\top} \be_i = \bX^{\dagger} \be_i
    \qquad\text{and}\qquad
    \bh = \bA^{\dagger,\top} \bd = - \bX^{\dagger} \be_i
\]
because $\bA^{\dagger} = \bX^{\dagger} (\bX^{\top})^{\dagger}$. 
Since $\bv^{\dagger} = \| \bv \|_2^{-2} \cdot \bv^{\top}$ for any vector $\bv$, we observe that 
\begin{align*}
    \bk \bk^{\dagger} \bA^{\dagger} 
         &= \frac{ \bk \bk^{\top} }{ \| \bk \|_2^2 } \bA^{\dagger}  
         = \frac{ \bX^{\dagger} \be_i \be_i^{\top} (\bX^{\dagger})^{\top} }{ \be_i^{\top} (\bX^{\dagger})^{\top} \bX^{\dagger} \be_i } \big( \bX^{\top} \bX \big)^{\dagger}
         = \bX^{\dagger} \frac{ \be_i \be^{\top}_i}{\be^{\top}_i \Gram_{\bX} \be_i } \Gram_{\bX} ( \bX^{\top} )^{\dagger},
         \\
    \bA^{\dagger} \bh^{\dagger, \top} \bh^{\top} 
         &= \bA^{\dagger} \frac{ \bh \bh^{\top} }{ \| \bh \|_2^2 }  
         =  \big( \bX^{\top} \bX \big)^{\dagger} \frac{ \bX^{\dagger} \be_i \be_i^{\top} (\bX^{\dagger})^{\top} }{ \be_i^{\top} (\bX^{\dagger})^{\top} \bX^{\dagger} \be_i } 
         =  \bX^{\dagger} \Gram_{\bX}  \frac{ \be_i \be^{\top}_i}{\be^{\top}_i \Gram_{\bX} \be_i } ( \bX^{\top} )^{\dagger},
         \\
    (\bk^\dagger \bA^\dagger \bh^{\dagger,\top}) \bk \bh^{\top}
         &= \frac{\bk^{\top} \bA^{\dagger} \bh }{ \| \bk\|_2^2 \| \bh \|_2^2 } \bk \bh^{\top}
         = \frac{ \be_i^{\top} (\bX^{\dagger})^{\top} \big( \bX^{\top} \bX )^{\dagger} \bX^{\dagger} \be_i }{ \big( \be_i^{\top} \Gram_{\bX} \be_i \big)^2 }
             \cdot \bX^{\dagger} \be_i \be_i^{\top} (\bX^{\dagger})^{\top}
          = \frac{ \be_i^{\top} {\Gram_{\bX}}^2 \be_i }{ \big( \be_i^{\top} \Gram_{\bX} \be_i \big)^2 }
             \cdot \bX^{\dagger} \be_i \be_i^{\top} (\bX^{\dagger})^{\top}.
\end{align*}

Thereafter, applying Lemma~\ref{lemma:meyer}, we obtain that
\begin{align}
    \big[ (\bX_{\si})^{\top} \bX_{\si} \big]^{\dagger} 
    &= \big(\bX^{\top} \bX - \bx_i \bx_i^{\top}\big)^{\dagger} 
    \nonumber \\
    &= \big( \bX^{\top} \bX \big)^{\dagger}
    - \bX^{\dagger} \frac{ \be_i \be^{\top}_i}{\be^{\top}_i \Gram_{\bX} \be_i } \Gram_{\bX} ( \bX^{\top} )^{\dagger}
    - \bX^{\dagger} \Gram_{\bX}  \frac{ \be_i \be^{\top}_i}{\be^{\top}_i \Gram_{\bX} \be_i } ( \bX^{\top} )^{\dagger}
    + \frac{ \be_i^{\top} {\Gram_{\bX}}^2 \be_i }{ \big( \be_i^{\top} \Gram_{\bX} \be_i \big)^2 }
    \cdot \bX^{\dagger} \be_i \be_i^{\top} (\bX^{\dagger})^{\top}
    \nonumber \\
    &= \bX^{\dagger} \left\{ \bI_n -  \frac{ \be_i \be^{\top}_i}{\be^{\top}_i \Gram_{\bX} \be_i } \Gram_{\bX} - \Gram_{\bX}  \frac{ \be_i \be^{\top}_i}{\be^{\top}_i \Gram_{\bX} \be_i } + \frac{ \be_i^{\top} {\Gram_{\bX}}^2 \be_i }{ \big( \be_i^{\top} \Gram_{\bX} \be_i \big)^2 } \be_i \be_i^{\top} \right\} (\bX^{\dagger})^{\top}.
    \label{eq:loo.pseudo}
\end{align}

By applying the above result \eqref{eq:loo.pseudo} on $\bW_{\si}$, we get $[(\bW_{\si})^\top \bW_{\si}]^\dagger = \bW^\dagger \left\{\bI_n - \bO_i\right\}(\bW^\dagger)^\top$, where $\bO_i = \frac{ \be_i \be^{\top}_i}{\be^{\top}_i \Gram_{\bW} \be_i } \Gram_{\bW} + \Gram_{\bW}  \frac{ \be_i \be^{\top}_i}{\be^{\top}_i \Gram_{\bW} \be_i } - \frac{ \be_i^{\top} {\Gram_{\bW}}^2 \be_i }{ \big( \be_i^{\top} \Gram_{\bW} \be_i \big)^2 } \be_i \be_i^{\top}$. Then we have

\begin{align*}
    (\bW_{\si})^\dagger \bW_{\si} &= \big[(\bW_{\si})^\top \bW_{\si}\big]^\dagger (\bW_{\si})^\top \bW_{\si} \\
    &=\bW^\dagger \left\{\bI_n - \bO_i\right\}(\bW^\dagger)^\top (\bW^\top \bW - \bw_i \bw_i^\top) \\
    & = \bW^\dagger \left\{\bI_n - \bO_i\right\} \bW - \bW^\dagger \left\{\bI_n - \bO_i\right\}(\bW^\dagger)^\top \bW^\top\be_i \be_i^\top \bW \quad (\because \bW \bW^\dagger = \bI, \bw_i = \bW^\top \be_i) \\
    & = \bW^\dagger \left\{\bI_n - \bO_i\right\} \bW - \bW^\dagger \left\{\bI_n - \bO_i\right\}\be_i \be_i^\top \bW\\
    & = \bW^\dagger \left\{\bI_n - \bO_i\right\} (\bI - \be_i \be_i^\top)\bW \\
    & = (\bI_q - \bP_i) \bW^\dagger \bW
\end{align*}
Thus we finish proving \eqref{eq:prep.4}.

We present the following lemma, which is useful for the proof.

\begin{lemma}[{\cite[Corollary 2]{shen2024}}] \label{lemma:shen_cor2}
    Let $\bX \in \Rb^{n \times p}$, $\by \in \Rb^n$, and $\cI = \{i\}^c$ for any $i \in [n]$. If $\bX$ has full row rank, then
    \begin{align}
        \hbbeta^{(\sim i)}
            &= \left\{ \bI_p - \frac{\bX^{\dagger} \cdot \be_i \be_i^\top \cdot \left(\bX^{\dagger}\right)^\top}{\be_i^\top \cdot \bG_{\bX} \cdot \be_i} \right\} \cdot \hbbeta,  \label{eq:beta.loo.hd}
    \end{align}  
    where $\hbbeta^{(\sim i)}$ is the fully regularized OLS estimator with the $i$-th sample left out, and $\hbbeta = \bX^\dagger \by$ is the full-sample estimator.
\end{lemma}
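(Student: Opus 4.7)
The plan is to reduce the computation of $\hbbeta^{(\sim i)}$ to a rank-one perturbation of $\bX^\top \bX$, apply the Moore--Penrose update from Lemma \ref{lemma:meyer} (whose consequence is the identity \eqref{eq:loo.pseudo} already recorded in the preceding proof of Proposition \ref{prop:loo}), and then collapse the resulting expression via a sequence of cancellations.

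First, I would write $\hbbeta^{(\sim i)} = (\bX_{\sim i})^{\dagger} \by_{\sim i} = \bigl[ (\bX_{\sim i})^{\top} \bX_{\sim i} \bigr]^{\dagger} (\bX_{\sim i})^{\top} \by_{\sim i}$ and substitute the two identities from equation \eqref{eqn:loo.products}: $(\bX_{\sim i})^{\top} \bX_{\sim i} = \bX^{\top} \bX - \bx_i \bx_i^{\top}$ and $(\bX_{\sim i})^{\top} \by_{\sim i} = \bX^{\top} (\bI_n - \be_i \be_i^{\top}) \by$. To invoke Lemma \ref{lemma:meyer} with $\bA = \bX^\top \bX$ and $\bc = \bx_i = -\bd$, the column- and row-space hypotheses are immediate because $\bx_i = \bX^{\top} \be_i \in \colsp(\bX^{\top}) = \colsp(\bX^{\top} \bX)$, while the scalar condition $1 + \bd^{\top} \bA^{\dagger} \bc = 0$ follows from $\bx_i^{\top} (\bX^{\top} \bX)^{\dagger} \bx_i = \be_i^{\top} \bX (\bX^{\top} \bX)^{\dagger} \bX^{\top} \be_i = \be_i^{\top} \bI_n \be_i = 1$, using that $\bX (\bX^{\top} \bX)^{\dagger} \bX^{\top} = \bI_n$ when $\bX$ has full row rank (which itself is a quick SVD check). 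Plugging \eqref{eq:loo.pseudo} into the normal-equation form, together with $(\bX^{\dagger})^{\top} \bX^{\top} = (\bX \bX^{\dagger})^{\top} = \bI_n$ for row-rank-$n$ matrices, gives
\begin{equation*}
\hbbeta^{(\sim i)} = \bX^{\dagger} \bigl( \bI_n - \bO_i \bigr) \bigl( \bI_n - \be_i \be_i^{\top} \bigr) \by,
\end{equation*}
where $\bO_i$ denotes the bracketed operator appearing in \eqref{eq:loo.pseudo}.

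The main obstacle is a careful expansion of $(\bI_n - \bO_i)(\bI_n - \be_i \be_i^{\top}) \by$: I would compute the four resulting terms $\by$, $-\be_i \be_i^{\top} \by = -y_i \be_i$, $-\bO_i \by$, and $+\bO_i \be_i \be_i^{\top} \by = y_i \bO_i \be_i$, then use $\be_i^{\top} \be_i = 1$ and the scalar identities built into the three summands of $\bO_i$ to observe three pairwise cancellations, namely the $y_i \bX^{\dagger} \be_i$ terms, the $\bX^{\dagger} \Gram_{\bX} \be_i$ terms, and the $(\be_i^{\top} \Gram_{\bX}^2 \be_i)(\be_i^{\top} \Gram_{\bX} \be_i)^{-2} \bX^{\dagger} \be_i$ terms. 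After these cancellations, only $\hbbeta = \bX^{\dagger} \by$ and one surviving rank-one correction of the form $-(\be_i^{\top} \Gram_{\bX} \be_i)^{-1} \bX^{\dagger} \be_i \be_i^{\top} \Gram_{\bX} \by$ remain. Rewriting $\be_i^{\top} \Gram_{\bX} \by = \be_i^{\top} (\bX^{\dagger})^{\top} \bX^{\dagger} \by = \be_i^{\top} (\bX^{\dagger})^{\top} \hbbeta$ (using the identity $(\bX^{\dagger})^{\top} \bX^{\dagger} = (\bX \bX^{\top})^{-1} = \Gram_{\bX}$, which holds because $\bX$ has full row rank) produces the claimed formula $\hbbeta^{(\sim i)} = [\bI_p - (\be_i^{\top} \Gram_{\bX} \be_i)^{-1} \bX^{\dagger} \be_i \be_i^{\top} (\bX^{\dagger})^{\top}] \hbbeta$, completing the proof.
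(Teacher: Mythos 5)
Your proof is correct: the hypotheses of Lemma~\ref{lemma:meyer} are verified properly (in particular the scalar condition $1+\bd^\top\bA^\dagger\bc=0$ via $\bX(\bX^\top\bX)^\dagger\bX^\top=\bP_{\bX}=\bI_n$ under full row rank), and the expansion of $\bX^\dagger(\bI_n-\bO_i)(\bI_n-\be_i\be_i^\top)\by$ does collapse through exactly the three cancellations you name, leaving $\hbbeta-(\be_i^\top\Gram_{\bX}\be_i)^{-1}\bX^\dagger\be_i\be_i^\top(\bX^\dagger)^\top\hbbeta$ as claimed. The paper itself supplies no proof of this lemma --- it is imported verbatim as Corollary~2 of \citet{shen2024} --- but your argument is essentially the same route the paper takes for its analogous in-house derivation: it uses the identities \eqref{eqn:loo.products} and the rank-one pseudoinverse update \eqref{eq:loo.pseudo} in precisely the way the appendix deploys them to establish \eqref{eq:prep.4}, so your write-up matches the paper's toolkit rather than offering a genuinely different method.
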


After deriving these properties, we are now ready to prove Corollary \ref{prop:loo}.

\begin{proof}[Proof of Proposition \ref{prop:loo}]
     Suppose 
     \begin{align}
         \bT = \bW \hbphi, \bT_{\si} = \bW_{\si} \hbphi^{(\si)}, \text{and } \by = \bW \hbpsi, \by_{\si} = \bW_{\si} \hbpsi^{(\si)}, \label{eq:submodels}
     \end{align}
      where all the coefficient estimators are minimum $\ell_2$-norm OLS estimators, i.e., fully regularized OLS estimators. By Lemma \ref{lemma:shen_cor2}, we have $\hbphi^{(\si)} = (\bI_q - \bP_i)\hbphi$ and $\hbpsi^{(\si)} = (\bI_q - \bP_i)\hbpsi$.
    
    Let $\Gram_{\bW_{\si}} = [\bW_{\si} (\bW_{\si})^\top]^\dagger$. We then use the expression in \eqref{eq:fwl.jc.hd.gls} to express $\hbtau^{(\si)}$:

    \begin{align}
        \hbtau^{(\si)} &= \big[(\bT_{\si})^\top \Gram_{\bW_{\si}} \bT_{\si}\big]^\dagger (\bT_{\si})^\top \Gram_{\bW_{\si}} \by_{\si}
    \end{align}

    Note that 

    \begin{align*}
        (\bT_{\si})^\top \Gram_{\bW_{\si}} \by_{\si} 
        &= (\bT_{\si})^\top (\bW_{\si})^{\dagger,\top} (\bW_{\si})^\dagger\by_{\si} \\
        &= \hbphi^{(\si)\top} \hbpsi^{(\si)} \\
        &= \hbphi^{\top} (\bI_q - \bP_i) \hbpsi \\
        &= \bT^\top \bW^{\top, \dagger} (\bI_q - \bP_i) \bW^\dagger \by \\
        &= \bT^\top \Gram_{\bW} (\bI_n - \bQ_i) \by 
        \quad \because \eqref{eq:prep.2}\\
        &= \bT^\top (\bI_n - \bQ_i)^\top \Gram_{\bW} (\bI_n - \bQ_i) \by 
        \quad \because \eqref{eq:prep.3}\\
        &= \bT^\top (\bI_n - \bQ_i)^\top \bW^{\dagger,\top} \bW^\dagger (\bI_n - \bQ_i) \by \\
        &= (\bW^\dagger (\bI_n - \bQ_i)\bT)^\top \bW^\dagger (\bI_n - \bQ_i) \by
    \end{align*}

    Similarly, $(\bT_{\si})^\top \Gram_{\bW_{\si}} \bT_{\si} 
    = (\bW^\dagger (\bI_n - \bQ_i)\bT)^\top \bW^\dagger (\bI_n - \bQ_i) \bT$.
    Therefore, we have

    \begin{align*}
        \hbtau^{(\si)} 
        & = \big[(\bT_{\si})^\top \Gram_{\bW_{\si}} \bT_{\si}\big]^\dagger (\bT_{\si})^\top \Gram_{\bW_{\si}} \by_{\si} \\
        & = [(\bW^\dagger (\bI_n - \bQ_i)\bT)^\top \bW^\dagger (\bI_n - \bQ_i) \bT]^\dagger (\bW^\dagger (\bI_n - \bQ_i)\bT)^\top \bW^\dagger (\bI_n - \bQ_i) \by \\
        & = [\bW^\dagger (\bI_n - \bQ_i)\bT]^\dagger \bW^\dagger (\bI_n - \bQ_i) \by 
        \quad \because \eqref{eq:pseudo_compute}\\
        & = [(\bI_q - \bP_i)\bW^\dagger\bT]^\dagger (\bI_q - \bP_i)\bW^\dagger \by \quad \because \eqref{eq:prep.2}
    \end{align*}

    Next, we derive $\hblambda^{(\si)}$ using the expression in \eqref{eq:fwl.j.hd.alt}:

    \begin{align}
        \hblambda^{(\si)} &= (\bW_{\si})^\top  \Gram_{\bW_{\si}}(\by_{\si} - \bT_{\si}\hbtau^{(\si)})  \nonumber\\
        &= \big[(\bW_{\si})^\dagger \bW_{\si}\big]^\top (\bW_{\si})^\dagger \by_{\si} - \big[(\bW_{\si})^\dagger \bW_{\si}\big]^\top (\bW_{\si})^\dagger\bT_{\si}\hbtau^{(\si)} \nonumber\\
        &=\bW^\dagger \bW (\bI_q - \bP_i)\hbpsi - \bW^\dagger \bW (\bI_q - \bP_i)\hbphi \hbtau^{(\si)} 
        \quad \because \eqref{eq:prep.4} \nonumber\\
        &=\bW^\top \Gram_{\bW} \bW (\bI_q - \bP_i) \bW^\dagger \by - \bW^\top \Gram_{\bW} \bW (\bI_q - \bP_i) \bW^\dagger \bT \hbtau^{(\si)} \quad (\because \bW^\dagger\bW=\bW^\top \Gram_{\bW} \bW) \nonumber\\
        & = \bW^\top \Gram_{\bW} (\bI_n - \bQ_i)\by - \bW^\top \Gram_{\bW} (\bI_n - \bQ_i)\bT\hbtau^{(\si)} 
        \quad \because \eqref{eq:prep.2} \label{eq:lambda_loo_mid}\\
        & = \bW^\top (\bI_n - \bQ_i)^\top \Gram_{\bW} (\bI_n - \bQ_i)\by - \bW^\top (\bI_n - \bQ_i)^\top \Gram_{\bW} (\bI_n - \bQ_i)\bT \hbtau^{(\si)}  
        \quad \because \eqref{eq:prep.3}\nonumber\\
        & = \bW^\top [\bW^\dagger(\bI_n - \bQ_i)]^\top \bW^\dagger (\bI_n - \bQ_i)\by - \bW^\top [\bW^\dagger(\bI_n - \bQ_i)]^\top \bW^\dagger (\bI_n - \bQ_i)\bT [\bW^\dagger (\bI_n - \bQ_i)\bT]^\dagger \bW^\dagger (\bI_n - \bQ_i) \by  \nonumber\\
         & = \bW^\top [\bW^\dagger(\bI_n - \bQ_i)]^\top \bW^\dagger (\bI_n - \bQ_i)\by - \bW^\top [\bW^\dagger(\bI_n - \bQ_i)]^\top \bP_{\bW^\dagger (\bI_n - \bQ_i)\bT}  \bW^\dagger (\bI_n - \bQ_i) \by  \nonumber\\
         & = \bW^\top [\bW^\dagger(\bI_n - \bQ_i)]^\top \bP_{\bW^\dagger (\bI_n - \bQ_i)\bT}^\perp \bW^\dagger (\bI_n - \bQ_i) \by \nonumber\\
         & = \bW^\top [(\bI_q - \bP_i)\bW^\dagger]^\top \bP_{(\bI_q - \bP_i)\bW^\dagger\bT}^\perp (\bI_q - \bP_i)\bW^\dagger \by \quad \because \eqref{eq:prep.2}     \nonumber
    \end{align}
\end{proof}

\subsection{Proof of Corollary~\ref{cor:loo.residual}}

Before proving this corollary, we first establish a useful expression: 

\begin{equation}
    \bw_i^\top \bW^\dagger \bQ_i = \be_i^\top \bD^{-1} \Gram_{\bW}, \label{eq:pf_diag}
\end{equation}
where $\bD = diag (\Gram_{\bW})$ and $\bQ_i$ is defined as in \eqref{mat:QiPi}. To see this, note that:

\begin{align*}
    \bw_i^\top \bW^\dagger \bQ_i &= \frac{\bw_i^\top \bW^\dagger \be_i \be_i^\top \Gram_{\bW}}{\be_i^\top \Gram_{\bW} \be_i} \\
    &= \frac{\bw_i^\top \bW^\dagger \bW^{\top, \dagger} \bw_i \be_i^\top \Gram_{\bW}}{\be_i^\top \Gram_{\bW} \be_i} \quad (\because \bw_i = \bW^\top \be_i)  \\
    &= \frac{\be_i^\top \Gram_{\bW}}{\be_i^\top \Gram_{\bW}\be_i} \quad (\because \bw_i^\top \bW^\dagger \bW^{\top, \dagger} \bw_i = 1)\\
    &= \be_i^\top \bD^{-1} \Gram_{\bW} \quad ( \because \bD^{-1}\be_i = \frac{\be_i}{\be_i^\top \bD \be_i})
\end{align*}

Then we are ready to prove Corollary \ref{cor:loo.residual}:

\begin{proof}[Proof of Corollary \ref{cor:loo.residual}]

We utilize \eqref{eq:fwl.j.hd.alt} and \eqref{eq:lambda_loo_mid}, which provide alternative expressions for $\hblambda$ and $\hblambda^{(\si)}$, to prove:

\begin{align*}
    \tbvarepsilon_i &= \by_i-\hby_i \\
    &= \bw_i^\top(\hblambda-\hblambda^{(\si)}) + \bt_i^\top(\hbtau-\hbtau^{(\si)}) \\
    &= \bw_i^\top \left(\bW^\top \Gram_{\bW} (\by - \bT \hbtau) - \bW^\top \Gram_{\bW} (\bI_n - \bQ_i) (\by - \bT\hbtau^{(\si)})\right) + \bt_i^\top (\hbtau - \hbtau^{(\si)}) \\
    & = \bw_i^\top \left(\bW^\dagger \by - \bW^\dagger \bT \hbtau - \bW^\dagger \by +\bW^\dagger\bT \hbtau^{(\si)} + \bW^\dagger \bQ_i\by - \bW^\dagger\bQ_i\bT\hbtau^{(\si)}\right) + \bt_i^\top (\hbtau - \hbtau^{(\si)}) \\
    &= \bw_i^\top \bW^\dagger \bQ_i (\by - \bT\hbtau^{(\si)}) - \bw_i^\top \bW^\dagger\bT (\hbtau-\hbtau^{(\si)}) + \bt_i^\top (\hbtau - \hbtau^{(\si)}) \\
    &= \be_i^\top \bD^{-1} \Gram_{\bW} (\by - \bT\hbtau^{(\si)}) \quad (\because \eqref{eq:pf_diag};\bw_i^\top \bW^\dagger \bT = \bw_i^\top \hbphi = \bt_i^\top, \text{ where }\hbphi \text{ is defined as in } \eqref{eq:submodels}) \\
    &= \be_i^\top \bD^{-1} \Gram_{\bW} (\bI_n - \bH_i) \by \quad \left(\text{substitute } \hbtau^{(\si)}; \bH_i = \bT [(\bI_q - \bP_i)\bW^\dagger \bT]^\dagger (\bI_q - \bP_i)\bW^\dagger\right)                                                                                                                                                                                                                                                                                                                                                                                                                                      
\end{align*}
    
\end{proof}

\section{Deferred proof from section \ref{sec:stat_infer}}\label{apdx:stat_infer}

We provide the following equation to facilitate the proof:

If Assumption \ref{assump:lm} holds with $\bSigma = \sigma^2 \bI$, then we have for any deterministic matrix $\bM \in \RR^{n \times n}$
\begin{align}
	\bbE[ \by^\top \bM \by] 
        &= \bbE\left[ \left(\bX \bbeta + \bvarepsilon \big)^\top \bM \big(\bX \bbeta + \bvarepsilon \right) \right] 
        \nonumber\\
        &= \bbeta^\top \bX^\top \bM \bX \bbeta + \bbE \left[ \bvarepsilon^\top \bM \bvarepsilon  \right]
        \nonumber\\
        &= \bbeta^\top \bX^\top \bM \bX \bbeta + \sigma^2 \cdot \tr(\bM),   \label{eqn:exp_quadratic}
\end{align}
where the last equality follows from 
\[
    \bbE [ \bvarepsilon^\top \bM \bvarepsilon  ] = \bbE [ \tr( \bvarepsilon^\top \bM \bvarepsilon) ] = \bbE [ \tr( \bM \bvarepsilon \bvarepsilon^\top ) ] = \tr( \bM \cdot \bbE [  \bvarepsilon \bvarepsilon^\top ] ) = \sigma^2 \cdot \tr(\bM).
\]

\subsection{Proof of Theorem \ref{thm:var.estimator.pr}}

Recall from Theorem \ref{cor:loo.residual} that $\tbvarepsilon_i =\be_i^\top \big[ \diag(\Gram_{\bW}) \big]^{-1} \cdot \Gram_{\bW} (\bI_n - \bH_i)  \by$, where $\bH_i = \bT [(\bI_q - \bP_i)\bW^\dagger \bT]^\dagger (\bI_q - \bP_i)\bW^\dagger$, $\bP_i = \frac{\bW^\dagger \be_i \be_i^\top \bW^{\dagger, \top}}{\be_i^\top \Gram_{\bW} \be_i}$. We apply \eqref{eqn:exp_quadratic} to obtain

\begin{align*}
	\bbE[ \tbvarepsilon_i^2] 
        &= \bbE[ \by^\top (\bI_n-\bH_i)^\top \Gram_{\bW}\big[ \diag(\Gram_{\bW}) \big]^{-1}\be_i
        \be_i^\top \big[ \diag(\Gram_{\bW}) \big]^{-1} \Gram_{\bW}(\bI_n-\bH_i)\by ]\\
        &= \bbeta^\top \bX^\top \bM_i \bX \bbeta 
        +
        \sigma^2 \cdot \tr \left( \bM_i\right) \\
        &\left(\text{where } \bM_i = (\bI_n-\bH_i)^\top \Gram_{\bW}\big[ \diag(\Gram_{\bW}) \big]^{-1}\be_i
        \be_i^\top\big[ \diag(\Gram_{\bW}) \big]^{-1} \Gram_{\bW}(\bI_n-\bH_i)\right)\\
        &= \bbE[\tbvarepsilon_i]^2 + \sigma^2  \left\|\be_i^\top \big[ \diag(\Gram_{\bW}) \big]^{-1} \Gram_{\bW}(\bI_n-\bH_i) \right\|_2^2
\end{align*}
where the last equality follows from the observation that $\bbE[\tbvarepsilon_i] = \be_i^\top\big[ \diag(\Gram_{\bW}) \big]^{-1} \Gram_{\bW}(\bI_n-\bH_i) \bX \bbeta$.

Take summation over $i$ on both sides, we have 

\begin{align}
    \bbE\big[ \|\tbvarepsilon\|_2^2] &=  \| \bbE[\tbvarepsilon]\|_2^2 + \sigma^2 \Sigma_{i=1}^n \left\|\be_i^\top \big[ \diag(\Gram_{\bW}) \big]^{-1} \Gram_{\bW}(\bI_n-\bH_i) \right\|_2^2
\end{align}
where the $i$-th element of $\tbvarepsilon$ is $\tbvarepsilon_i$.
Therefore, 

\begin{align}
     \bbE\big[ \frac{\|\tbvarepsilon\|_2^2}{\Sigma_{i=1}^n \left\|\be_i^\top\big[ \diag(\Gram_{\bW}) \big]^{-1} \Gram_{\bW}(\bI_n-\bH_i) \right\|_2^2}\big] &=  \frac{\| \bbE[\tbvarepsilon]\|_2^2}{\Sigma_{i=1}^n \left\|\be_i^\top\big[ \diag(\Gram_{\bW}) \big]^{-1} \Gram_{\bW}(\bI_n-\bH_i) \right\|_2^2} + \sigma^2 
\end{align}

\subsection{Proof of Theorem~\ref{thm:var.estimator.partial}} \label{sec:proof.var.partial_reg}

\begin{proof}[Proof of Theorem~\ref{thm:var.estimator.partial}] 
We first prove \eqref{eq:bias.j.hd}. 
In this pursuit, we plug $\hbbeta_{\cW}^{[\cW]}$ in \eqref{eq:beta.fwl.hd} into the numerator of \eqref{eq:var.estimator.partial.hd.j} to obtain 
\begin{align}
	\| \by - \bP_{\bT}^\perp \bW \cdot \hbbeta_{\cJ}^{[\cJ]} \|_2^2
	&= \| \{ \bI_n - (\bP_{\bT}^\perp \bW)(\bP_{\bT}^\perp \bW)^\dagger \} \cdot \by \|_2^2 \nonumber
	\\
	&= \by^\top \cdot \{ \bI_n - (\bP_{\bT}^\perp \bW)(\bP_{\bT}^\perp \bW)^\dagger \} \cdot \by\\
        &= \by^\top \cdot \{ \bI_n - \bP_{\bT}^\perp \bW \bW^\dagger \bP_{\bT}^\perp\} \cdot \by\\
        &= \by^\top \cdot\bP_{\bT}\cdot \by\label{eq:bias.partial.1} 
\end{align} 
Applying \eqref{eqn:exp_quadratic} with $\bM = \bP_{\bT}$ to \eqref{eq:bias.partial.1} and observing $\bbE[\by] = \bX \bbeta$ completes this part of the proof. 

Next, we prove \eqref{eq:bias.jc.hd} following the proof strategy above. 
That is, we plug $\hbbeta_{\cW^c}^{[\cW]}$ in \eqref{eq:beta.fwl.hd} into the numerator of \eqref{eq:var.estimator.partial.hd.jc} to obtain 
\begin{align}
    \| \bW^\dagger \by - \bW^\dagger \bT \cdot \hbbeta_{\cJ^c}^{[\cJ]} \|_2^2
    &= \| \{ \bI_{q} - (\bW^\dagger \bT)(\bW^\dagger \bT)^\dagger \} \bW^\dagger \cdot \by \|_2^2
    \nonumber\\
    &= \by^\top \cdot (\bW^\dagger)^\top \{ \bI_{q} - (\bW^\dagger \bT)(\bW^\dagger \bT)^\dagger \} \bW^\dagger \cdot \by. \label{eq:bias.partial.2} 
\end{align} 
Applying \eqref{eqn:exp_quadratic} with $\bM = (\bW^\dagger)^\top \bP_{\bW^\dagger\bT}^\perp \bW^\dagger$ to \eqref{eq:bias.partial.2} and observing $\bbE[\by] = \bX \bbeta$ completes the proof. 

Therefore, $\bbE[\hsigma_{\cW^c}^2] = \sigma^2+\frac{ \| \bP_{\bW^\dagger\bT}^\perp \cdot \bW^\dagger \cdot \bbE[\by] \|_2^2}{\tr(\bP_{\bW^\dagger\bT}^\perp \cdot \{ \bW^\top \bW \}^\dagger)}$
\end{proof}

\section{Simulation details to Section \ref{sec:simulation}}

Recall that we omitted the results for $\hsigmareg^2$ in the main text. Here, we present the results for all four variance estimators together and provide a more detailed analysis.

\subsection{Simulation I: fixed covariate size $p$ and varying sample size $n$}

As shown in the first row of Figure \ref{fig:bias_sim1_supp}, the bias of $\hsigma_\cW^2$ is large and increases significantly with the sample size, along with a gradual rise in standard deviation.
Zooming in, we observe that for the standard normal model, the bias decreases for all three estimators as $n$ increases, with $\hsigmap^2$ exhibiting the smallest bias.
In both the spiked and geometric models, the other three estimators show near-zero average bias across all sample sizes. Additionally, their variances decrease as $n$ increases, with $\hsigma_{\cW^c}^2$ showing a smaller variance in the geometric model.

\begin{figure*}[h!]
	\centering  
	\begin{subfigure}{0.30\linewidth}
		\centering 
		\includegraphics[width=\textwidth]
		{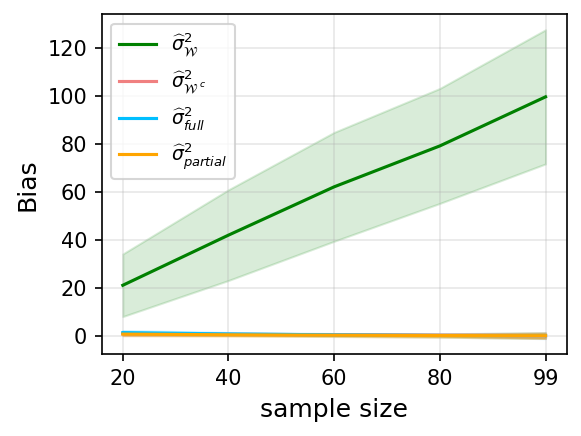}
	\end{subfigure} 
        \begin{subfigure}{0.30\linewidth}
		\centering 
		\includegraphics[width=\textwidth]
		{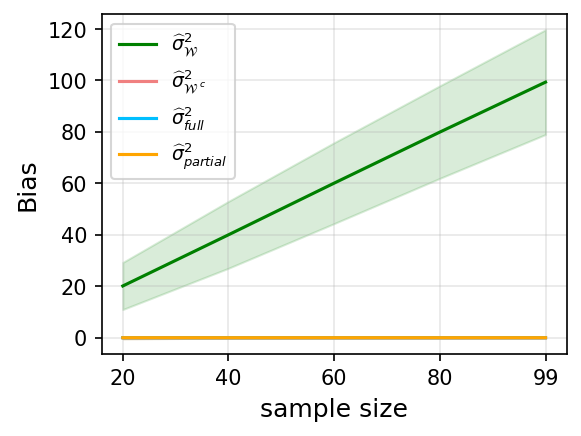}
	\end{subfigure}
        \begin{subfigure}{0.30\linewidth}
		\centering 
		\includegraphics[width=\textwidth]
		{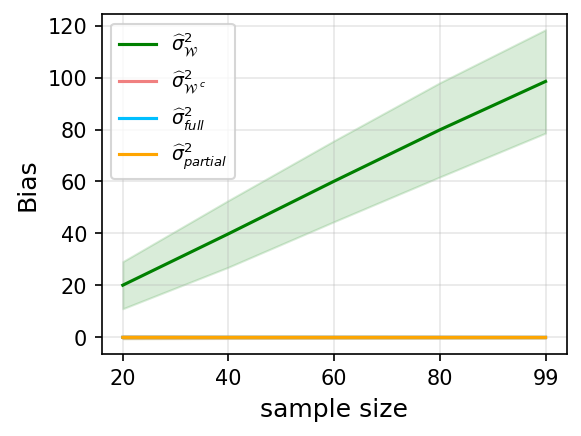}
	\end{subfigure}

        \begin{subfigure}{0.30\linewidth}
		\centering 
		\includegraphics[width=\textwidth]
		{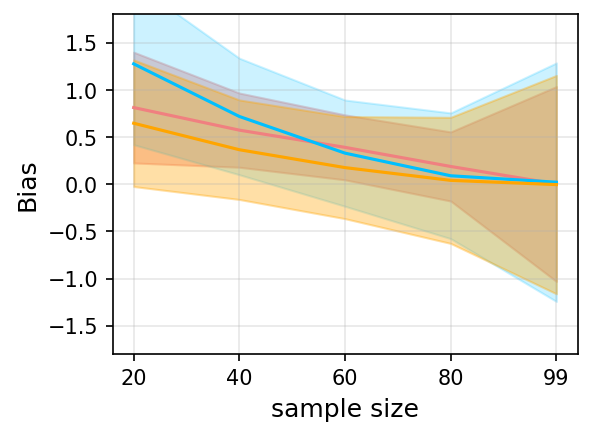}
		\caption{Standard normal model.} 
	\end{subfigure} 
        \begin{subfigure}{0.30\linewidth}
		\centering 
		\includegraphics[width=\textwidth]
		{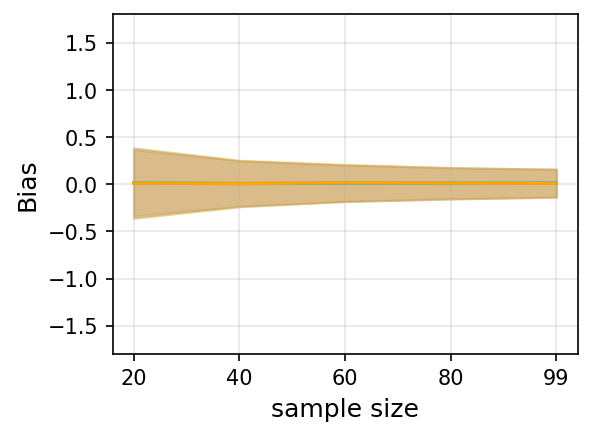}
		\caption{Spiked model.} 
	\end{subfigure}
        \begin{subfigure}{0.30\linewidth}
		\centering 
		\includegraphics[width=\textwidth]
		{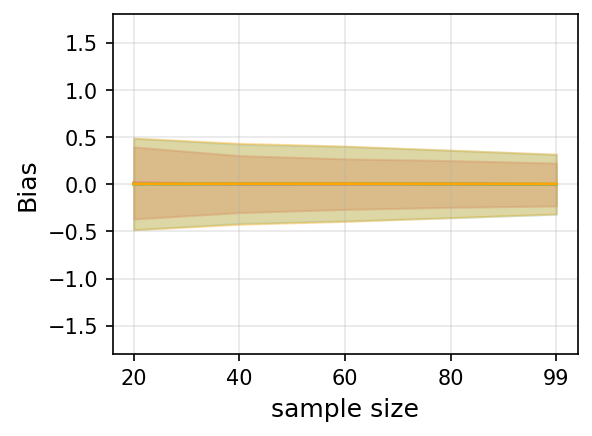}
		\caption{Geometric model.} 
	\end{subfigure}
 \vspace{.2in}
	\caption{Simulation results with fixed $p = 100$ and varying $n \in \{20, 40, 60, 80, 99\}$. 
    The second row shows a zoomed-in view of the first row. Solid lines represent the mean over 100 trials, with shading indicating $\pm$ one standard error.
    }
	\label{fig:bias_sim1_supp} 
\end{figure*}

\subsection{Simulation II: fixed $(n,p)$ ratio and varying covariate size $p$}

Figure~\ref{fig:bias_sim2_supp} visualizes the biases $\hsigma^2 - \sigma^2$ for each variance estimator and covariate model across different covariate sizes.
Similar to Simulation I, the bias of $\hsigmareg^2$ increases with covariate size across all three models.
For the spiked and geometric models, the average biases of all three estimators are nearly zero across different covariate sizes.
In contrast, for the standard normal model, the biases are noticeably positive for all three estimators, with $\hsigmap^2$ showing the smallest bias and $\hsigma_{\cW^c}^2$ the largest.
In the geometric model, the variances of $\hsigmap^2$ and $\hsigmaf^2$ increase with $p$, while $\hsigma_{\cW^c}^2$ remains nearly constant. In the standard normal and spiked models, the variances of all three estimators decrease as $p$ increases.
Moreover, $\hsigma_{\cW^c}^2$ exhibits the smallest variance among the three estimators in both the standard normal and geometric models, indicating its stability and precision.

\begin{figure*}[h!]
	\centering 
	\begin{subfigure}{0.32\linewidth}
		\centering 
		\includegraphics[width=\textwidth]
		{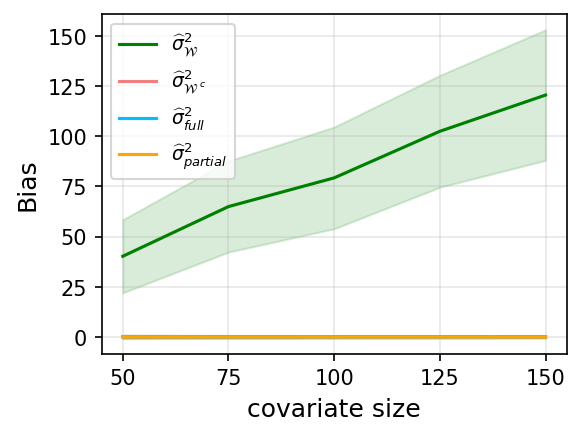}
	\end{subfigure} 
        \begin{subfigure}{0.32\linewidth}
		\centering 
		\includegraphics[width=\textwidth]
		{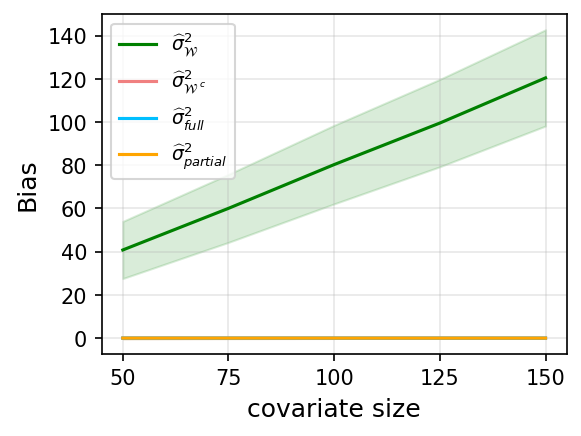} 
	\end{subfigure}
        \begin{subfigure}{0.32\linewidth}
		\centering 
		\includegraphics[width=\textwidth]
		{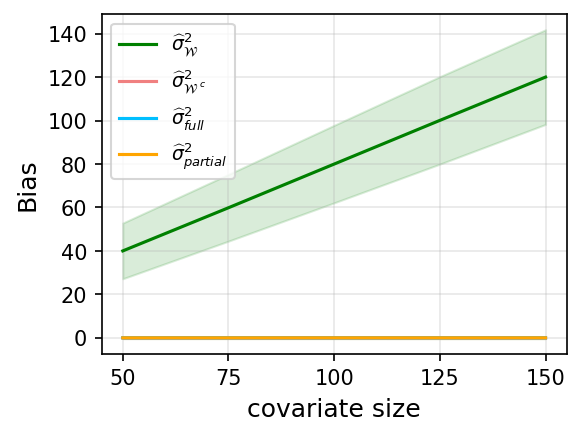}
	\end{subfigure}
        \begin{subfigure}{0.32\linewidth}
		\centering 
		\includegraphics[width=\textwidth]
		{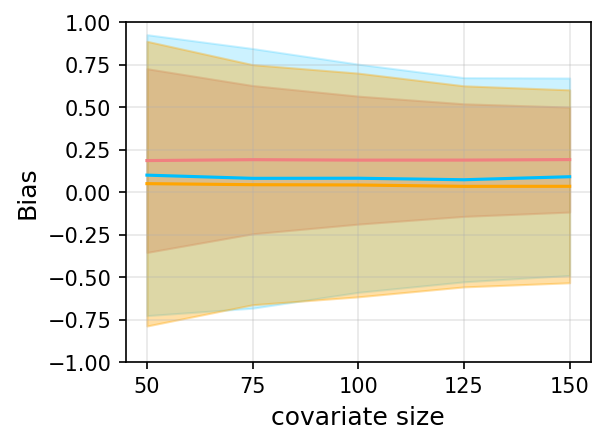}
		\caption{Standard normal model.} 
	\end{subfigure} 
        \begin{subfigure}{0.32\linewidth}
		\centering 
		\includegraphics[width=\textwidth]
		{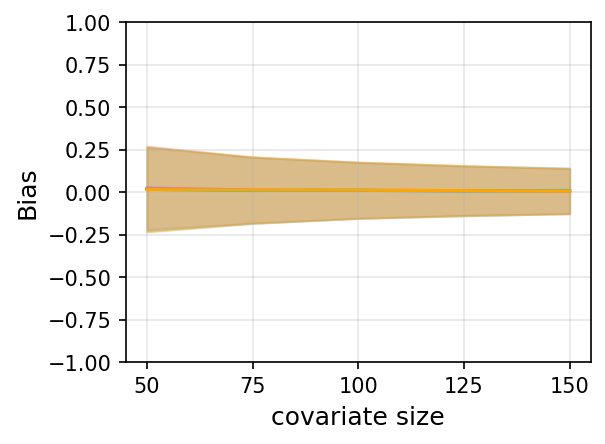}
		\caption{Spiked model.} 
	\end{subfigure}
        \begin{subfigure}{0.32\linewidth}
		\centering 
		\includegraphics[width=\textwidth]
		{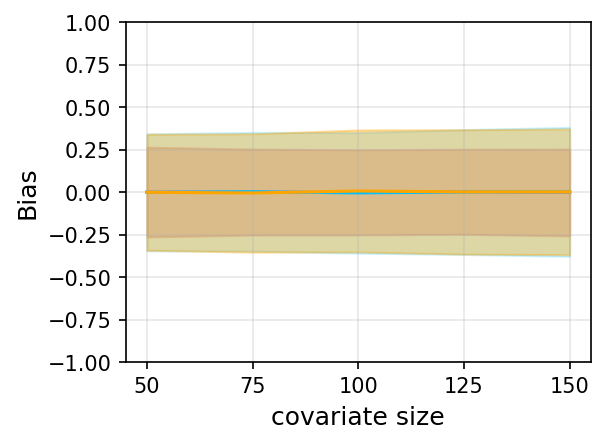}
		\caption{Geometric model.} 
	\end{subfigure}
	\caption{Simulation results displaying the biases of four variance estimators with fixed aspect ratio $n/p = 0.8$ and varying $p \in \{50,75,100,125,150 \}$. 
	The second row shows a zoomed-in view of the first row. The solid lines represent the mean over 100 trials, with shading indicating $\pm$ one standard error.} 
	\label{fig:bias_sim2_supp} 
\end{figure*}

\subsection{Simulation III: fixed $(n,p)$ and increasing noise variance $\sigma^2$}

Figure~\ref{fig:bias_sim3_supp} shows the biases $\hsigma^2 - \sigma^2$ for each variance estimator and covariate model across increasing noise levels.
$\hsigma_\cW^2$ maintains a consistently high bias with increasing variance. In the zoomed-in plots, we observe that the average biases of $\hsigma_{\cW^c}^2$, $\hsigmaf^2$, and $\hsigmap^2$ are nearly zero across all models, though their variances grow with higher noise levels. Despite this, $\hsigma_{\cW^c}^2$ consistently has the smallest variance among the three estimators in both the standard normal and geometric models.

\subsection{Simulation IV: fixed $(n,p)$ and increasing intercept $\beta_0$}

Figure~\ref{fig:bias_sim4_supp} shows the biases $\hsigma^2 - \sigma^2$ for each variance estimator and covariate model as the intercept magnitude increases.
As before, $\hsigma_\cW^2$ performs poorly, with bias increasing as the intercept magnitude grows.
In the zoomed-in plots, the bias of $\hsigmaf^2$ increases significantly with intercept magnitude in the standard normal model.
Although $\hsigma_{\cW^c}^2$ shows slightly more bias than $\hsigmap^2$, whose bias is nearly zero, it has smaller variance.
In both the spiked and geometric models, the average biases of $\hsigma_{\cW^c}^2$, $\hsigmaf^2$, and $\hsigmap^2$ are nearly zero, with their variances remaining stable.

\subsection{Summary}

In addition to the general takeaway in Remark \ref{rmk:simu_takeaway}, a comparison of Simulation III with the other three simulations reveals that the bias $\bbE[ \hsigma^2 ] - \sigma^2$ primarily depends on the design matrix $\bX = [\bW,\bone]$ and the parameter $\bbeta = (\bbeta_1^\top, \beta_0)^\top$, as shown by the bias terms in \eqref{eq:bias.hd}, \eqref{eq:bias.hd.partial}, \eqref{eq:bias.j.hd}, and \eqref{eq:bias.jc.hd}.

The variance estimator $\hsigma_\cW^2$, based on in-sample residuals from partial regression induced by $\hbbeta_\cW^{[\cW]}$, consistently shows high bias across all simulation settings and generative models.
In contrast, $\hsigma_{\cW^c}^2$, the estimator based on in-sample residuals induced by $\hbbeta_{\cW^c}^{[\cW]}$, performs well, exhibiting nearly zero bias in the spiked and geometric models and the lowest variance across most models and simulations.

Under the high-dimensional regime, full regularization induces implicit regularization across all coefficients, whereas partial regularization leaves the constant column unregularized.
Our observations show that $\hsigmaf^2$ has higher bias in the standard normal model (Simulations I, II, and IV), indicating its limitations in handling isotropic, unstructured data compared to $\hsigmap^2$. However, both estimators perform similarly in the spiked and geometric models, suggesting comparable accuracy in estimating noise variance in structured contexts.

\begin{figure*}[h!]
	\centering 
		\begin{subfigure}{0.32\linewidth}
		\centering 
		\includegraphics[width=\textwidth]
		{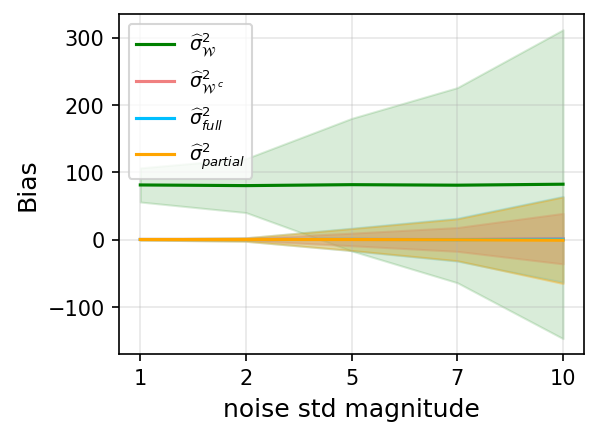}
	\end{subfigure} 
        \begin{subfigure}{0.32\linewidth}
		\centering 
		\includegraphics[width=\textwidth]
		{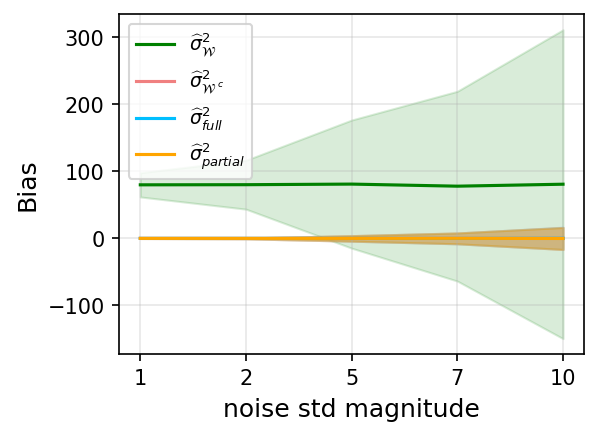}
	\end{subfigure}
        \begin{subfigure}{0.32\linewidth}
		\centering 
		\includegraphics[width=\textwidth]
		{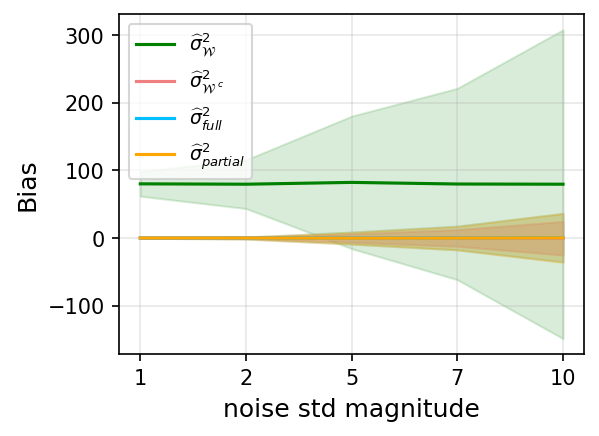}
	\end{subfigure}
        \begin{subfigure}{0.32\linewidth}
		\centering 
		\includegraphics[width=\textwidth]
		{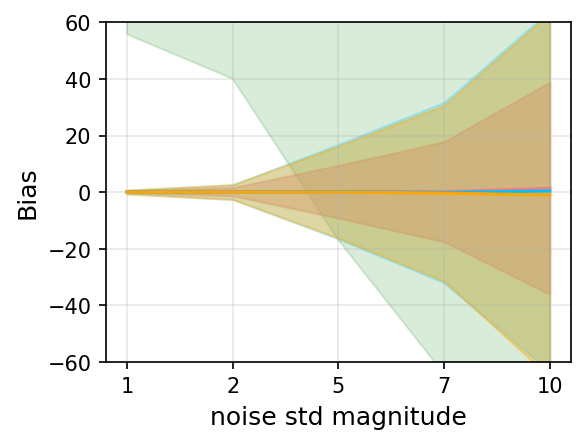}
		\caption{Standard normal model.} 
	\end{subfigure} 
        \begin{subfigure}{0.32\linewidth}
		\centering 
		\includegraphics[width=\textwidth]
		{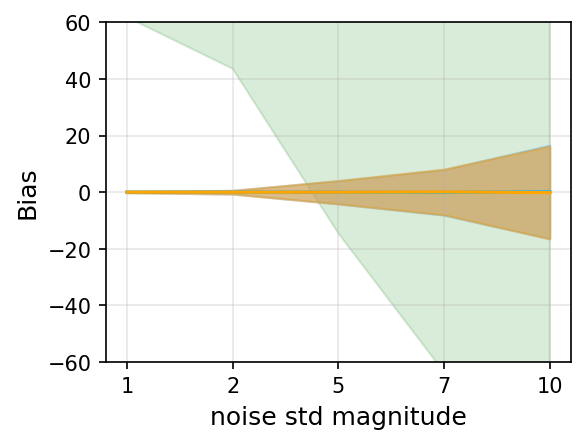}
		\caption{Spiked model.} 
	\end{subfigure}
        \begin{subfigure}{0.32\linewidth}
		\centering 
		\includegraphics[width=\textwidth]
		{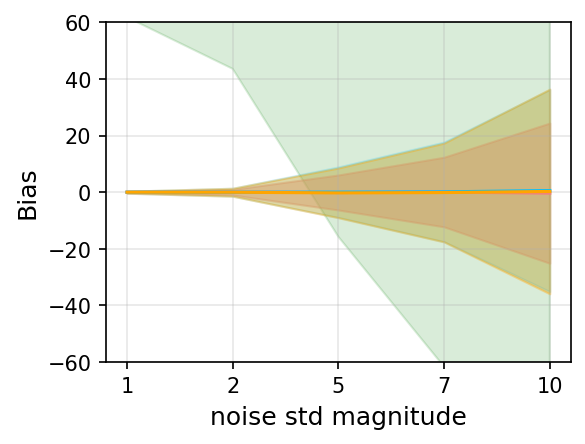}
		\caption{Geometric model.} 
	\end{subfigure}
	\caption{Simulation results displaying the biases of our variance estimator in \eqref{eq:var.estimator.hd} with fixed $p=100$, $n=80$, and varying $\sigma \in \{1, 2, 5, 7, 10 \}$. 
	The second row shows a zoomed-in view of the first row. The solid lines represent the mean over 100 trials, with shading indicating $\pm$ one standard error.}
	\label{fig:bias_sim3_supp} 
\end{figure*}

\begin{figure*}[h!]
	\centering 
		\begin{subfigure}{0.30\linewidth}
		\centering 
		\includegraphics[width=\textwidth]
		{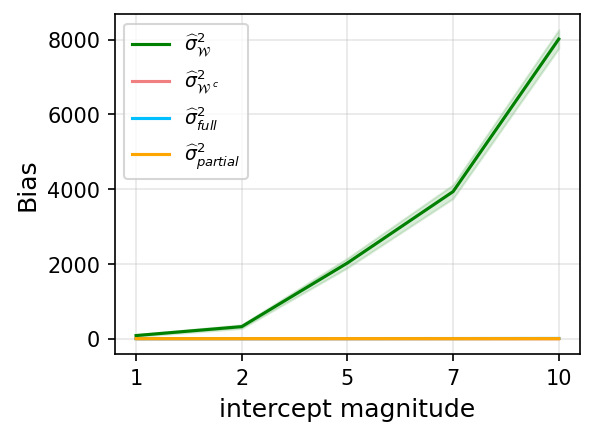}
	\end{subfigure} 
        \begin{subfigure}{0.30\linewidth}
		\centering 
		\includegraphics[width=\textwidth]
		{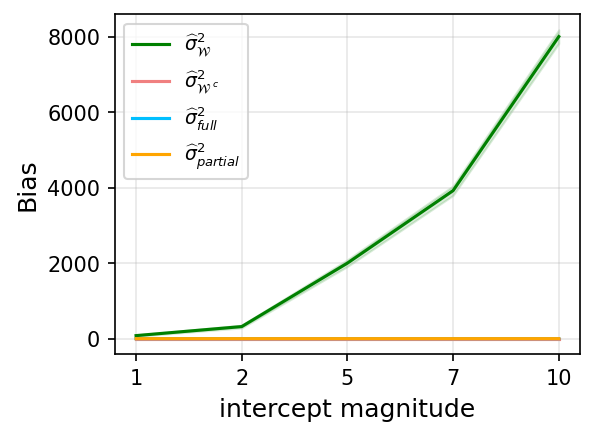}
	\end{subfigure}
        \begin{subfigure}{0.30\linewidth}
		\centering 
		\includegraphics[width=\textwidth]
		{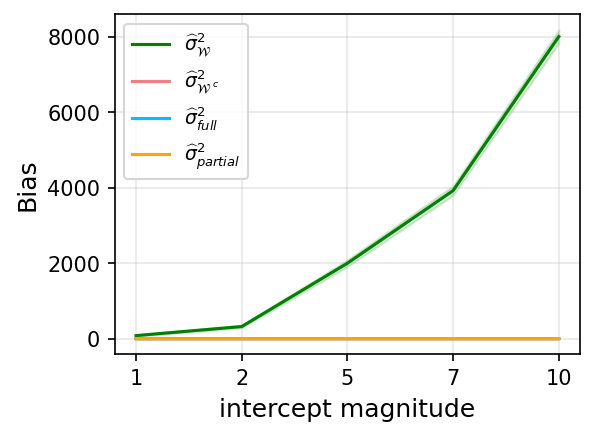}
	\end{subfigure}
        \begin{subfigure}{0.30\linewidth}
		\centering 
		\includegraphics[width=\textwidth]
		{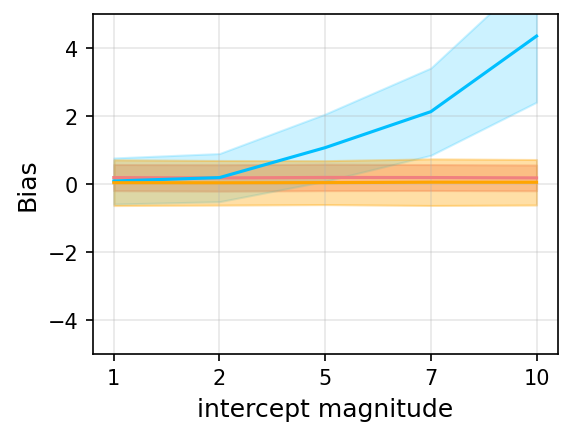}
		\caption{Standard normal model.} 
	\end{subfigure} 
        \begin{subfigure}{0.30\linewidth}
		\centering 
		\includegraphics[width=\textwidth]
		{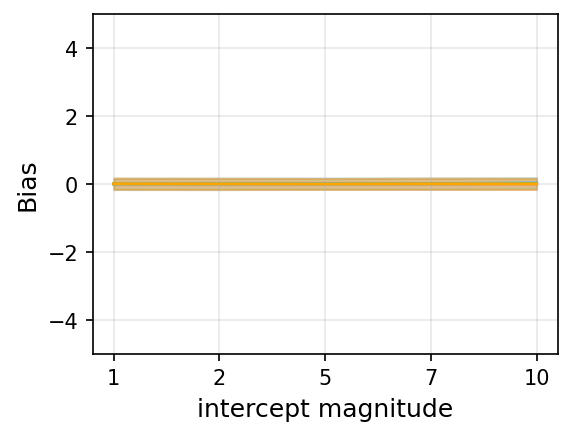}
		\caption{Spiked model.} 
	\end{subfigure}
        \begin{subfigure}{0.30\linewidth}
		\centering 
		\includegraphics[width=\textwidth]
		{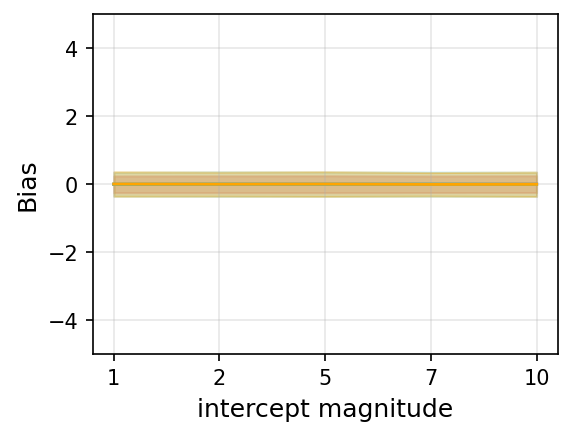}
		\caption{Geometric model.} 
	\end{subfigure}
	\caption{Simulation results displaying the biases of our variance estimator in \eqref{eq:var.estimator.hd} with fixed $p=100$, $n=80$, and varying intercept $\beta_0 \in \{1, 2, 5, 7, 10 \}$. 
	The second row shows a zoomed-in view of the first row. The solid lines represent the mean over 100 trials, with shading indicating $\pm$ one standard error.}
	\label{fig:bias_sim4_supp} 
\end{figure*}

\end{document}